% !TEX TS-program = pdflatex
% !TEX encoding = UTF-8 Unicode

% This is a simple template for a LaTeX document using the "article" class.
% See "book", "report", "letter" for other types of document.

\documentclass[11pt]{article} % use larger type; default would be 10pt

\usepackage[utf8]{inputenc} % set input encoding (not needed with XeLaTeX)

%%% Examples of Article customizations
% These packages are optional, depending whether you want the features they provide.
% See the LaTeX Companion or other references for full information.

%%% PAGE DIMENSIONS
\usepackage{geometry} % to change the page dimensions
\geometry{a4paper,left=1.6cm,right=1.6cm,top=3cm,bottom=3cm} % or letterpaper (US) or a5paper or....
% \geometry{margin=2in} % for example, change the margins to 2 inches all round
% \geometry{landscape} % set up the page for landscape
%   read geometry.pdf for detailed page layout information

\usepackage{graphicx} % support the \includegraphics command and options

% \usepackage[parfill]{parskip} % Activate to begin paragraphs with an empty line rather than an indent

%%% PACKAGES
\usepackage{booktabs} % for much better looking tables
\usepackage{array} % for better arrays (eg matrices) in maths
\usepackage{paralist} % very flexible & customisable lists (eg. enumerate/itemize, etc.)
\usepackage{verbatim} % adds environment for commenting out blocks of text & for better verbatim
\usepackage{subfig} % make it possible to include more than one captioned figure/table in a single float
% These packages are all incorporated in the memoir class to one degree or another...
\usepackage{xcolor}

%\usepackage{refcheck}
%\usepackage{showkeys}

%%% HEADERS & FOOTERS
\usepackage{fancyhdr} % This should be set AFTER setting up the page geometry
\pagestyle{fancy} % options: empty , plain , fancy
 % customise the layout...
\lhead{}\chead{}\rhead{}
\lfoot{}\cfoot{\thepage}\rfoot{}
\usepackage{amsmath,amssymb,amstext,amsthm}
%%% SECTION TITLE APPEARANCE
\usepackage{sectsty}
%\allsectionsfont{\sffamily\mdseries\upshape} % (See the fntguide.pdf for font help)
% (This matches ConTeXt defaults)

%%% ToC (table of contents) APPEARANCE
\usepackage[nottoc,notlof,notlot]{tocbibind} % Put the bibliography in the ToC

\makeatother

\newlength{\bibitemsep}\setlength{\bibitemsep}{.2\baselineskip plus .05\baselineskip minus .05\baselineskip}
\newlength{\bibparskip}\setlength{\bibparskip}{0pt}
\let\oldthebibliography\thebibliography
\renewcommand\thebibliography[1]{%
  \oldthebibliography{#1}%
  \setlength{\parskip}{\bibitemsep}%
  \setlength{\itemsep}{\bibparskip}%
}

\usepackage[titles,subfigure]{tocloft} % Alter the style of the Table of Contents

 % No bold!

\newtheorem{theorem}{\indent Theorem}[section]
\newtheorem{lemma}[theorem]{\indent Lemma}
\newtheorem{remark}[theorem]{\indent Remark}
\newtheorem{proposition}[theorem]{\indent Proposition}
\newtheorem{defn}{Definition}[section]

%%% END Article customizations

% Notation

\newcommand\s{\sigma}

\renewcommand{\l}{\langle}
\renewcommand{\r}{\rangle}
\newcommand{\dive}{{\mathrm{div}\,}}
\newcommand{\dd}{{\mathrm{d}}}
\newcommand{\uu}{{\mathbf{u}}}
\newcommand{\ff}{{\mathbf{f}}}
\newcommand{\vg}{{\mathbf{g}}}

\newcommand{\GG}{{\mathbf{G}}}

\newcommand{\RR}{{\mathbb{R}}}
\newcommand{\e}{\varepsilon}

\newcommand{\p}{\partial}
\newcommand{\aaa}{\alpha}
\newcommand{\calB}{\mathcal{B}}

\newcommand{\uue}{\widetilde \uu_{\e}}

\newcommand\bp{\begin{pmatrix}}
\newcommand\ep{\end{pmatrix}}
\newcommand\be{\begin{equation}}
\newcommand\ee{\end{equation}}
\newcommand\ba{\begin{equation}\begin{aligned}}
\newcommand\ea{\end{aligned}\end{equation}}

\newcommand\nn{\nonumber}

\if0
\setlength\topmargin{0pt}
\addtolength\topmargin{-\headheight}
\addtolength\topmargin{-\headsep}
\setlength{\evensidemargin}{0pt}
\setlength{\oddsidemargin}{0pt}
\setlength\textwidth{\paperwidth}
\addtolength\textwidth{-2in}
\setlength\textheight{\paperheight}
\addtolength\textheight{-2in}
\usepackage{layout}
\fi
\numberwithin{equation}{section}

% end nontation

%%% The "real" document content comes below...

%\title{Homogenization of Inhomogeneous Incompressible Navier-Stokes Equations in  Domains with Very Tiny Holes}
%\author{Yong Lu \and Jiaojiao Pan\footnote{Corresponding author. \\  E-mail address: luyong@nju.edu.cn (Y. Lu), panjiaojiao@smail.nju.edu.cn (J. Pan), ypk@smail.nju.edu.cn (P. Yang). }\and Peikang Yang}
%\date{} 

\title{Homogenization of Inhomogeneous Incompressible Navier-Stokes Equations in  Domains with Very Tiny Holes}
\author{Yong Lu\footnote{School of Mathematics, Nanjing University, Nanjing 210093, China, luyong@nju.edu.cn} \and Jiaojiao Pan\footnote{School of Mathematics, Nanjing University, Nanjing 210093, China, panjiaojiao@smail.nju.edu.cn}\and Peikang Yang\footnote{School of Mathematics, Nanjing University, Nanjing 210093, China, ypk@smail.nju.edu.cn}}
\date{}

% Activate to display a given date or no date (if empty),
         % otherwise the current date is printed
%\centering{School of Mathematics, Nanjing University, Nanjing 210093, China}
\begin{document}
\maketitle

\noindent
{\bf Abstract} ~In this paper, we study the homogenization problems of  $3D$ inhomogeneous  incompressible Navier-Stokes system perforated with very tiny holes whose diameters are much smaller than their mutual distances.  The key is to establish the equations in the homogeneous domain without holes for the zero extensions of the weak solutions. This allows us to derive time derivative estimates and show the strong convergence of the density and the momentum by Aubin-Lions type argument. For the case of small holes, we finally show the limit equations remain unchanged in the homogenization limit.

\medskip
\noindent
{\bf Keywords} ~Homogenization,  Inhomogeneous Incompressible Navier-Stokes Equations, Perforated domains

\medskip
\noindent
{\bf Mathematics Subject Classification} ~35B27, 76M50, 76N06

\section{Introducton}

In this paper, we study the homogenization of inhomogeneous incompressible Navier-Stokes equations in a perforated domain in $\mathbb{R}^{3}$ with very tiny holes with Dirichlet boundary condition. Our goal is to describe the limit behavior of the (weak) solutions as the number of holes goes to infinity and the size of holes goes to zero simultaneously.

To describe the perforated domain, let $\Omega  \subset {\mathbb{R}^3}$ be a bounded domain of class $C^{2}$. The holes in $\Omega$ are denoted by $\{T_{\e,k}\}_{k\in K_{\e}}$, which are assumed to satisfy
\begin{equation}\label{1-hole}
 T_{\e ,k} =  {x_{\e,k}} + \e^\alpha T_{k}\subset B({x_{\e,k}},{\delta_0}\e^\alpha)   \subset B({x_{\e,k}}, \delta_1\e^\alpha)  \subset B({x_{\e,k}},{\delta_2}\e^\alpha) \subset B({x_{\e,k}},{\delta_3}\e^\alpha) \subset  \Omega,
\end{equation}
where for each $k$, $T_{k}\in \mathbb{R}^3$ is a simply connected bounded domain of class $C^{2}$. Here we assume that $\delta _i, \ i = 0,1,2,3$ are fixed positive numbers independent of $\e$. The perforation parameter $\e^\alpha (\alpha>3)$ is used to measure the size of holes and the minimal mutual distances between the holes are of size $O(\e)$. The perforated domain $\Omega_{\e}$ under consideration is described as:
\begin{equation}\label{1-domain}
\Omega _\varepsilon : = \Omega \backslash \bigcup\limits_{k \in K_\varepsilon} {T_{\varepsilon ,k}}. 
\end{equation}
By the distribution of holes assumed above, the number of holes in $\Omega_{\e}$ satisfy
\be
|K_\varepsilon|\leq C\frac{|\Omega|}{\e^3},\quad \mbox{for some} ~C~ \mbox{independent of} ~ \e.\nn
\ee

 In this paper, we are devoted to  the homogenization of inhomogeneous incompressible Navier-Stokes equations in perforated domain  $\Omega_\varepsilon$  with Dirichlet boundary condition. Our goal is to give a complete description for the homogenization process related to small sizes of holes. Let $T>0$, the initial boundary value problem in space-time cylinder $(0,T)\times\Omega_{\e}$ under consideration is the following:
\begin{equation}\label{equa-inhom-NS}
\begin{cases}
\partial_t \rho_\e + \dive (\rho_\e\uu_\e) = 0, &\mbox{in}\; (0,T)\times\Omega_{\e},\\
\partial_t (\rho_\e\uu_\e) + \dive (\rho_\e\uu_\e \otimes \uu_\e)
-\mu\Delta \uu_\e+ \nabla p_\e = \rho_\e\ff_\e, &\mbox{in}\; (0,T)\times\Omega_{\e},\\
\dive  \uu_\varepsilon = 0, &\mbox{in}\;(0,T)\times\Omega_{\e},\\
\uu_\varepsilon = 0, &\mbox{on}\;(0,T)\times\partial\Omega_{\e},\\
\rho_\e |_{t = 0} = \rho_\e^0,\quad (\rho_\e\uu_\e) |_{t = 0} = \rho_\e^0\uu_\e^0.
\end{cases}
\end{equation}
Here $\rho_\e, \uu_{\e}=(\uu_{\e,1},\uu_{\e,2},\uu_{\e,3})$ are the density and velocity of the fluid respectively, $p_{\e}$ is the fluid pressure. The external force $\ff_{\e}$ is assumed to be in $L^{\infty}((0,T)\times\Omega_{\e};\RR^{3})$, $\mu$ is a positive constant which represents viscosity coefficient. Since the value of $\mu$ is not relevant in our study, we will assume $\mu=1$ in the rest of the paper for simplicity.

The study of homogenization problems in fluid mechanics have gained a lot of interest.  In particular, the homogenization of Stokes system in perforated domains has been systematically studied. In 1980s, Tartar \cite{Tartar} considered the case where the size of holes is proportional to the mutual distance of holes and derived Darcy's law. In 1990s,  Allaire \cite{Allaire1,Allaire2} considered general size $a_\e$ of holes which are periodically distributed and found that the homogenized limit equations are determined by the ratio $\sigma_\e$ given as
\begin{equation}\label{1-sigma}
\sigma _\varepsilon: = \Big(\frac{\varepsilon^d}{a_\varepsilon^{d-2}}\Big)^{\frac{1}{2}},  \ d \geq 3;\quad{\sigma _\varepsilon}: = \varepsilon \left| \log \frac{{a_\varepsilon }}{\varepsilon} \right|^{\frac{1}{2}}, \ d = 2.\nn
\end{equation}
Roughly speaking, if $\lim_{\e\to 0}\s_{\e}=0$ corresponding to the case of supercritical size of holes, the limit behavior is governed by Darcy's law. If $\lim_{\e\to 0}\s_{\e}=+\infty$ corresponding to the case of subcritical size of holes, the limit system coincides with the original system.  If $\lim_{\e\to 0}\s_{\e}=\s_{*}\in (0,+\infty)$ corresponding to critical size of holes, the asymptotic limit behavior is Brinkman's law. Later, Lu \cite{Lu1} gave a unified approach for the homogenization of Stokes equations in perforated domains. Furthermore, Jing, Lu and Prange \cite{Wenjia Jing2} developed a unified method to obtain the quantitative homogenization of Stokes systems in periodically perforated domains. Such quantitative homogenization results were obtained by Jing \cite{Wenjia Jing,Wenjia Jing1}  for Laplace equations and Lamé systems.

As for the homogenization of incompressible Navier-Stokes equations, there are lots of results. Mikeli\'c  \cite{Mikelic} studied the incompressible Navier-Stokes equations in a porous medium, where the size of holes is proportional to the mutual distance of  holes and the Darcy's law is recovered in the limit. Recently, Feireisl, Namlyeyeva and Ne\v{c}asov\'a  \cite{Feireisl-Namlyeyeva-Necasova} studied the case with critical size of holes for the incompressible Navier-Stokes equations and derived Brinkman's law. Lu and Yang \cite{LY-Yang} considered the case of supercritical and subcritical size of holes. At low Mach number, Bella and Oschmann \cite{BO1} considered the critical case for incompressible Navier-Stokes equations, where the Brinkman equations were obtained. H\"{o}fer \cite{Richard M. Hofer} studied the solution to incompressible Navier-Stokes equations in perforated domains in the inviscid limit and proved quantitative convergence results in all regimes.

 The homogenization for the compressible Navier-Stokes equations was considered in \cite{BO2,Diening-Feireisl-Lu,Feireisl-Lu,Lu2} for the case of small holes, in Masmoudi \cite{Masmoudi} for the case of large holes. At low Mach number, H\"{o}fer, Kowalczyk and Schwarzacher \cite{Richard M. Hofer1} studied the case of large holes for the compressible Navier-Stokes equations and derived Darcy's law.  H\"{o}fer, Ne\v{c}asov\'a, Oschmann \cite{Richard M. Hofer2} gave quantitative homogenization of the compressible Navier-Stokes equations towards Darcy’s law.

 The homogenization study is extended to more complicated models describing fluid flows.  Feireisl, Novotn\'y and Takahashi \cite{Feireisl-Novotny1} studied the Navier-Stokes-Fourier equations. Lu and Qian \cite{LY-Qian} considered the homogenization of evolutionary incompressible non-Newtonian flows of Carreau-Yasuda type and Darcy’s law was recovered. Then, Lu and Oschmann \cite{Lu3} gave qualitative/quantitative homogenization of stationary and evolutionary incompressible non-Newtonian flows of Carreau-Yasuda type towards Darcy’s law. 

\medskip

The inhomogeneous incompressible Navier-Stokes equations can be considered as a model for the evolution of a multi-phase flow consisting of several immiscible, incompressible fluids. There are fruitful mathematical results to this model. Global existence results for inhomogeneous, incompressible Navier-Stokes equations were first obtained by Kazhikov\cite{Kazhikov1}-see also Kazhikov and Smagulov \cite{Kazhikov2}. There is also a series of results by Zhang et al. concerning the existence and uniqueness of strong solutions, see \cite{Abidi-Gui-Zhang,Liao-Zhang,Qian-Zhang,Zhang}.  In particular, in Chapter II of the  book \cite{Lions1}, Lions gave a complete description concerning the existence of finite energy weak solutions. The homogenization of $3D$ inhomogeneous incompressible Navier-Stokes system in perforated domains in critical case has been considered by Pan \cite{Pan}, which shows that when $\e\to 0$, the velocity and density converge to a solution of inhomogeneous incompressible Navier-Stokes system with a friction term of Brinkman type. 

In this paper, we are working in the framework of weak solutions in the sense of Lions \cite{Lions1} in perforated spatial domains. Our goal is to show the asymptotic behavior of the solutions when the perforation parameter $\e \to 0$. In the case of small holes, we are able to show that the equations remain unchanged. This coincides with  the previous studies for Stokes equations.  

\subsection{Notations and definition of weak solutions} 

We recall some  notations. Let $W^{1,q}_{0}(\Omega)$ be the collection of Sobolev functions in $W^{1,q}(\Omega)$ with zero trace, and let $W^{-1,q}$ be the dual space of $W^{1,q}_{0}(\Omega)$.  We set $W_{\dive}^{1,q}(\Omega):=\{v\in W_0^{1,q}(\Omega),\ \dive  v =0\}$ with $W^{1,q}$ norm and ${V^{-1,q}}(\Omega)$ be the dual space of ${W_{\dive}^{1,q'}}(\Omega)$. Let $L^{q}_{0}(\Omega)$ be the collection of $L^{q}(\Omega)$ integrable functions with zero integral mean. We sometimes use $L^r L^s$ to denote the Bochner space $L^r(0,T;L^s(\Omega))$ or $L^r(0,T;L^s(\Omega);\RR^{3})$ for short. For a function $g$ defined on $\Omega_{\e}$, we use the notation $\widetilde  g$ to represent its zero extension in $\Omega$:
$$
\widetilde  g = g \ \mbox{in $\Omega_{\e}$}, \quad \widetilde  g = 0 \  \mbox{in $\Omega\setminus \Omega_{\e} = \bigcup\limits_{k \in {K_\varepsilon }} {T_{\varepsilon ,k}}$}.
$$

We  now recall the definition of (finite energy) weak solutions:
\begin{defn}\label{def-weak}
We call $(\rho_\e,\uu_\e)$  a (finite energy) weak solution of \eqref{equa-inhom-NS} in $(0,T)\times\Omega_{\e}$ provided:
\begin{itemize}

\item There holds:
\ba\label{def-weak-1}
& \rho_\e \in L^\infty(0,T;L^\infty(\Omega_\e))\cap C([0,T], L^2(\Omega_\e)),\ \rho_\e\geq 0
\ \mbox{a.e. in} \ (0,T)\times\Omega_{\e},   \\
& \sqrt{\rho_\e} \uu_\e \in L^\infty(0,T;L^2(\Omega_\e)),\  \uu_\e \in L^2(0,T;W^{1,2}_0(\Omega_\e)).\nn
\ea
Moreover
\ba\label{def-weak-2}
&\int_{\Omega_\e} \left| \rho_\e(t,x) \right|^q \dd x
=\int_{\Omega_\e} \left| \rho_\e^0(x) \right|^q \dd x,\, \mbox{for any} \ q\in [1,\infty)
\,\mbox{and}\ t\in [0,T),\\
&\left\| \rho_\e  \right\|_{L^\infty (0,T;L^\infty(\Omega_\e))} = \left\| \rho_\e^0 \right\|_{L^\infty(\Omega_\e)}.
\ea

\item  The continuity equation is satisfied in the weak sense
\ba\label{weak-continuity}
-\int_{\Omega_\e} \rho_\e^0 \psi (0,x) \dd x
=\int_0^T \int_{\Omega_\e} \rho_\e(\partial_t\psi +\uu_\e  \cdot \nabla\psi) \dd x\dd t,\nn
\ea
for any $\psi\in C^\infty_c([0,T)\times\Omega_{\e})$.

\item The momentum equations hold in the sense that
\ba\label{weak-momentum}
& \int_0^T \int_{\Omega_\e}-\rho_\e\uu_\e \cdot \partial_t\varphi - \rho_\e\uu_\varepsilon  \otimes \uu_\varepsilon :\nabla \varphi+\nabla\uu_\e:\nabla \varphi   \dd x\dd t  \\
& = \int_0^T \int_{\Omega_\varepsilon} \rho_\e\ff_\e   \cdot \varphi \dd x\dd t + \int_{\Omega _\e} \rho_\e^0\uu_\e^0  \cdot \varphi (0,x)\dd x,
\ea
for any $\varphi\in C^\infty_c([0,T)\times\Omega_{\e} ;\mathbb{R}^3)$ with $\dive  \varphi =0$.

\item The energy inequality
\ba\label{energy-inequa}
\frac12\int_{\Omega_\e} \rho_\e\left| \uu_\e (t,x) \right|^2 \dd x+\int_0^t \int_{\Omega_\e} \left|\nabla\uu_\e\right|^2 \dd x\dd s
\leq \frac12\int_{\Omega_\e} \rho_\e^0\left| \uu^0_{\e}(x) \right|^2\dd x
+\int_0^t \int_{\Omega_\e} \rho_\e\ff_{\e} \cdot \uu_\e  \dd x\dd s,
\ea
holds for a.e. $t\in  (0,T)$.

\end{itemize}

 Moreover, a finite energy weak solution $(\rho_\e, \uu_\e)$ is said to be a renormalized weak solution if
\ba\label{renorm-equa}
\partial_t \beta(\rho_\e) + \uu_\e\cdot\nabla\beta(\rho_\e) = 0, \ \mbox{in} \ \mathcal{D}'((0,T)\times\RR^3),
\ea
for any $\beta\in C^1([0,\infty))$. In \eqref{renorm-equa}, $(\rho_\e, \uu_\e)$ are extended to be zero outside $\Omega_\e$.
\end{defn}

\begin{remark}\label{remark-1}
For each fixed $\e$, the existence of renormalized weak solutions defined above is known, see for example the systematic study in Lion's book \cite {Lions1}. Furthermore, Lions \cite{Lions1} Theorem  2.3  showed that if $(\rho_\e,\uu_\e)$ is a weak solution of \eqref{equa-inhom-NS} in $(0,T)\times\Omega_{\e}$, then we have $P_{\e}(\rho_\e\uu_\e)\in C_{weak}([0,T];L^2(\Omega_\e))$. Here $P_\e$ is the Leray-Helmholtz projection operator onto the subspace of divergence free vector fields on $\Omega_{\e}$. 
\end{remark}

\subsection{Main Results}

Throughout the paper, we will assume the zero extension of the initial data and the external force satisfy
\ba\label{ini-force}
&\widetilde \rho_{\e}^{0} \to \rho^{0}  &\mbox{strongly in}&  \ L^{\infty}(\Omega),\\
&\widetilde \uu_{\e}^{0}\rightarrow \uu_{0}  &\textup{strongly in}& \ L^{2}(\Omega;\mathbb R^{3}),\\
&\widetilde \ff_{\e} \to \ff  &\mbox{strongly in}&  \ L^{\infty}((0,T)\times\Omega).
\ea
Now we state our results.  Note that the limits are taken up to possible extractions of subsequences. 
\begin{theorem}\label{thm-1}
Let $(\rho_\e,\uu_\e)$ be a renormalized weak solution of the inhomogeneous incompressible Navier-Stokes system \eqref{equa-inhom-NS} in the sense of Definition \ref{def-weak} with initial data and external force satisfying \eqref{ini-force}. Then we have
\ba\label{thm1-conv}
&\widetilde\rho_\e \to \rho \ \mbox{strongly in} \ C([0,T];L^p(\Omega))\ \mbox{for all}\ 1\leq p<\infty,\\
&\uue \to {\uu} \ \mbox{weakly in} \  L^2(0,T;W_0^{1,2}(\Omega)).
\ea
Moreover, $(\rho,\uu)$ is a weak solution of the inhomogeneous incompressible Navier-Stokes equations in the sense of  Definition \ref{def-weak} in homogeneous domain $\Omega$:
\begin{equation}
\begin{cases}\nn %\label{equa-u-p-small}
\partial_t \rho + \dive (\rho\uu) = 0, &\mbox{in}\; (0,T)\times\Omega,\\
\partial_t (\rho\uu) + \dive (\rho\uu \otimes \uu)
-\mu\Delta\uu+ \nabla p = \rho\ff, &\mbox{in}\; (0,T)\times\Omega,\\
\dive  \uu = 0, &\mbox{in}\;(0,T)\times\Omega,\\
\uu = 0, &\mbox{on}\;(0,T)\times\partial\Omega,\\
\rho |_{t = 0} = \rho^0,\quad (\rho\uu) |_{t = 0} = \rho^0\uu^0.
\end{cases}
\end{equation}
\end{theorem}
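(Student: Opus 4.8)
The plan is to pass to the limit in the weak formulation after establishing $\e$-independent bounds, equations for the zero extensions on all of $\Omega$, and strong compactness of the density and momentum. First I would record the uniform estimates. Applying Grönwall's inequality to the energy inequality \eqref{energy-inequa} and using \eqref{ini-force} and \eqref{def-weak-2} gives, uniformly in $\e$, that $\|\sqrt{\rho_\e}\uu_\e\|_{L^\infty(0,T;L^2(\Omega_\e))}+\|\nabla\uu_\e\|_{L^2((0,T)\times\Omega_\e)}\le C$ and $\|\rho_\e\|_{L^\infty L^\infty}=\|\rho_\e^0\|_{L^\infty}\le C$, together with conservation of every $L^q$-norm of $\rho_\e$. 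Because $\uu_\e\in W^{1,2}_0(\Omega_\e)$, the zero extension satisfies $\uue\in W^{1,2}_0(\Omega)$, $\dive\uue=0$ in $\Omega$, and $\|\nabla\uue\|_{L^2(\Omega)}=\|\nabla\uu_\e\|_{L^2(\Omega_\e)}$; hence, along a subsequence, $\uue\rightharpoonup\uu$ in $L^2(0,T;W^{1,2}_0(\Omega))$ with $\dive\uu=0$, $\widetilde\rho_\e\rightharpoonup\rho$ weakly-$*$ in $L^\infty((0,T)\times\Omega)$, and $\widetilde{\rho_\e\uu_\e}=\widetilde\rho_\e\uue$ is bounded in $L^\infty(0,T;L^2(\Omega))\cap L^2(0,T;L^6(\Omega))$.

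Next comes the step singled out in the abstract: the equations for the extensions on $\Omega$. Since $\uu_\e=0$ on $\partial\Omega_\e$ there is no flux through the holes, so the continuity equation extends unchanged, $\partial_t\widetilde\rho_\e+\dive(\widetilde\rho_\e\uue)=0$ in $\mathcal{D}'((0,T)\times\Omega)$ --- this is already built into \eqref{renorm-equa}. The momentum equation is more delicate, because a solenoidal test field on $\Omega$ need not vanish on the holes and the pressure reappears. Here I would invoke a restriction operator $R_\e\colon W^{1,2}_0(\Omega)\to W^{1,2}_0(\Omega_\e)$ that preserves the divergence-free constraint and whose norm is governed by the ratio $\sigma_\e=\e^{(3-\alpha)/2}$; since $\alpha>3$ makes $\sigma_\e\to\infty$ (the subcritical, small-hole regime), one has $R_\e\varphi\to\varphi$ in $W^{1,2}_0(\Omega)$ for each solenoidal $\varphi$. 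Testing \eqref{weak-momentum} with $R_\e\varphi$ and estimating $\varphi-R_\e\varphi$ yields the momentum identity for the extensions with a remainder tending to $0$, and in particular a uniform bound $\partial_t P(\widetilde{\rho_\e\uu_\e})\in L^2(0,T;W^{-1,3/2}(\Omega))$, with $P$ the Leray projection on $\Omega$, coming from $\nabla\uue\in L^2 L^2$, the convective flux $\widetilde\rho_\e\,\uue\otimes\uue\in L^2 L^{3/2}$, and $\widetilde{\rho_\e\ff_\e}\in L^\infty L^\infty$.

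With these time-derivative bounds I would run Aubin--Lions. For the density, $\partial_t\widetilde\rho_\e$ is bounded in $L^2(0,T;W^{-1,2}(\Omega))$ while $\widetilde\rho_\e$ is bounded in $L^\infty(0,T;L^2)$ with $L^2\hookrightarrow\hookrightarrow W^{-1,2}(\Omega)$, so $\widetilde\rho_\e\to\rho$ in $C([0,T];W^{-1,2}(\Omega))$ and $\widetilde\rho_\e(t)\rightharpoonup\rho(t)$ in every $L^q$. To reach the strong convergence in $C([0,T];L^p)$ claimed in \eqref{thm1-conv}, I would check that the limit $(\rho,\uu)$ obeys the renormalized continuity equation (DiPerna--Lions, using $\dive\uu=0$), so that $\|\rho(t)\|_{L^q}=\|\rho^0\|_{L^q}=\lim_\e\|\widetilde\rho_\e(t)\|_{L^q}$; weak convergence together with convergence of norms in the uniformly convex space $L^q$ upgrades this to strong convergence. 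Likewise, $P(\widetilde{\rho_\e\uu_\e})$ is bounded in $L^\infty(0,T;L^2)$ with $\partial_t P(\widetilde{\rho_\e\uu_\e})\in L^2 W^{-1,3/2}$, hence relatively compact in $C([0,T];W^{-1,2}(\Omega))$.

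The decisive compactness for the nonlinear term follows from the identity $\int_0^T\!\int_\Omega\widetilde\rho_\e|\uue|^2\,\dd x\,\dd t=\int_0^T\langle P(\widetilde{\rho_\e\uu_\e}),\uue\rangle\,\dd t$, where the strong convergence of the momentum in $L^2(0,T;W^{-1,2})$ pairs with the weak convergence of $\uue$ in $L^2(0,T;W^{1,2}_0)$; after identifying the weak limit of $\widetilde\rho_\e\uue$ as $\rho\uu$ (from the strong convergence of $\widetilde\rho_\e$), this gives $\|\sqrt{\widetilde\rho_\e}\,\uue\|_{L^2 L^2}\to\|\sqrt\rho\,\uu\|_{L^2 L^2}$, and with $\sqrt{\widetilde\rho_\e}\,\uue\rightharpoonup\sqrt\rho\,\uu$ in $L^2((0,T)\times\Omega)$ I obtain strong convergence there. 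Then $\widetilde{\rho_\e\uu_\e\otimes\uu_\e}=(\sqrt{\widetilde\rho_\e}\,\uue)\otimes(\sqrt{\widetilde\rho_\e}\,\uue)\to\rho\uu\otimes\uu$ in $L^1$, the viscous term passes by weak convergence of $\nabla\uue$, the linear terms by the strong convergence of $\widetilde\rho_\e$ and $\widetilde{\rho_\e\ff_\e}$, the initial-data terms by \eqref{ini-force}, and the hole remainders vanish by the second step; sending $\e\to0$ recovers the weak formulation of Definition \ref{def-weak} on $\Omega$. The genuine obstacle is that second step --- producing the extended momentum equation with quantitatively vanishing hole contributions, i.e. the construction of $R_\e$ and the uniform control of the extended pressure; once those $\e$-independent bounds are in hand, the remaining compactness is the standard machinery for the inhomogeneous incompressible system.
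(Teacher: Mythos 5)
Your overall strategy coincides with the paper's: extend $(\rho_\e,\uu_\e)$ by zero, derive the continuity and momentum equations in the homogeneous domain $\Omega$ with quantitatively small hole remainders, extract time-derivative control for the projected momentum, and then combine Aubin--Lions with Lions' identity $\int_0^T\int_\Omega\widetilde\rho_\e|\uue|^2\,\dd x\,\dd t=\int_0^T\int_\Omega P(\widetilde\rho_\e\uue)\cdot\uue\,\dd x\,\dd t$ to get norm convergence, hence strong convergence of $\sqrt{\widetilde\rho_\e}\,\uue$ and of the convective term; the final limit passage is identical. The differences are in the devices. Where you invoke an Allaire-type divergence-preserving restriction operator $R_\e$ (with $\sigma_\e=\e^{(3-\alpha)/2}\to\infty$), the paper builds the admissible test function explicitly as $\varphi_1=g_\e\varphi-\calB_\e(\varphi\cdot\nabla g_\e)$, with $g_\e$ cut-offs vanishing on the holes and $\calB_\e$ the Bogovskii operator on $\Omega_\e$ from Lemma \ref{lem-div}; this yields the explicit rate $\e^\sigma$, $\sigma=((3-q)\alpha-3)/q>0$, of Proposition \ref{moment-equa}. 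Your route is the classical one and would work, provided you verify that $R_\e\varphi-\varphi$ vanishes in norms strong enough to pair with the convective flux (the paper needs $L^4L^{r_1}$-type control of $\nabla\varphi$, not just $L^2L^2$). For the density you re-derive Lions' compactness theorem (Aubin--Lions, DiPerna--Lions renormalization of the limit, conservation of $L^q$ norms, uniform convexity), whereas the paper cites it wholesale as Proposition \ref{compact-density}; both are legitimate.

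The one genuine gap is your claim that testing with $R_\e\varphi$ gives ``a uniform bound $\partial_t P(\widetilde\rho_\e\uue)\in L^2(0,T;W^{-1,3/2}(\Omega))$.'' This does not follow. The hole remainder --- your $\varphi-R_\e\varphi$ terms, the paper's $\langle\GG_\e,\varphi\rangle$ --- contains the contribution $\int_0^T\int\rho_\e\uu_\e\cdot\bigl(R_\e(\partial_t\varphi)-\partial_t\varphi\bigr)\dd x\,\dd t$, which is small but is estimated by an $\e$-power times $\|\partial_t\varphi\|_{L^{4/3}L^2}$, not by spatial norms of $\varphi$ alone; compare \eqref{est-F-3D}. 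Hence the available estimates control $\partial_t P(\widetilde\rho_\e\uue)$ only modulo the distributional time derivative of an $O(\e^\sigma)$-small family, and do not place it in any fixed space $L^pW^{-1,r}$ uniformly in $\e$. This is precisely why the paper introduces the decomposition \eqref{decomposition-u}, $P(\widetilde\rho_\e\uue)=(\widetilde\rho_\e\uue)^{(1)}+\e^\sigma(\widetilde\rho_\e\uue)^{(2)}$, where only $\partial_t(\widetilde\rho_\e\uue)^{(1)}$ is bounded (in $L^{4/3}(0,T;W^{-1,r_1'}(\Omega))$), while $(\widetilde\rho_\e\uue)^{(2)}$ is merely bounded in $L^4L^2$ so that $\e^\sigma(\widetilde\rho_\e\uue)^{(2)}\to0$ strongly as in \eqref{remainder1}; Aubin--Lions is applied to the first piece only, and the strong convergence of $P(\widetilde\rho_\e\uue)$ is recovered by adding the two. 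Your subsequent argument goes through verbatim once this decomposition replaces the incorrect direct bound, so the gap is repairable, but it is exactly the technical point your write-up elides. A minor remark: the ``uniform control of the extended pressure'' you list as an obstacle never arises, since both you and the paper use solenoidal test functions throughout; no pressure estimate is needed at any stage.
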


The rest of the paper is devoted to the proof of above theorem. In the sequel,  $C$ denotes a constant independent of $\varepsilon$, while its value may differ from line to line.

\section{Uniform estimates}
The uniform estimates of $\rho_{\e}$ follow from the continuity equation \eqref{equa-inhom-NS} immediately. By the definition of weak solution $(\rho_\e, \uu_\e)$ in \eqref{def-weak-2} and  the uniform bounds of the initial data in \eqref{ini-force} we have
\ba\label{est-density}
\|\rho_\e \|_{L^\infty(0,T;L^\infty(\Omega_\e))}= \|\rho^0_\e \|_{L^\infty(\Omega_\e)}\leq C.
\ea

The uniform estimates of $\uu_\e$ follow directly from the energy inequality \eqref{energy-inequa}. Indeed, using H\"older's inequality, Poincar\'e inequality and estimate of density \eqref{est-density}, we have
\ba\nn
&\frac12\int_{\Omega_\e} \rho_\e\left| \uu_\e (t,x) \right|^2 \dd x
+\int_0^t \int_{\Omega_\e}  \left|\nabla\uu_\e\right|^2  \dd x\dd s\\
&\leq \int_0^t \int_{\Omega_\e} \rho_\e\ff_{\e} \cdot \uu_\e  \dd x\dd s
+\frac12\int_{\Omega_\e} \rho_e^0\left| \uu^0_{\e}(x) \right|^2\dd x\\
&\leq \left\| \rho_\e^0 \right\|_{L^\infty(\Omega_\e)}
\int_0^t \left(\int_{\Omega_\e} \left| \ff_\e\right|^2   \dd x\right)^{\frac12}
\left(\int_{\Omega_\e} \left| \uu_\e\right|^2  \dd x\right)^{\frac12}\dd s
+\frac12\int_{\Omega_\e} \rho_\e^0\left| \uu^0_{\e}(x) \right|^2\dd x \\
&\leq C\sup_{0<\e\leq1}\left\| \ff_\e \right\|^2_{L^2(0,t;L^2(\Omega_\e))}
+\frac12\left\|  \nabla\uu_\e \right\|^2_{L^2(0,t;L^2(\Omega_\e))}
+\frac12\sup_{0<\e\leq1} \|\rho_\e^0\|_{L^{\infty}(\Omega_\e)}\left\|\uu^0_{\e} \right\|^2_{L^2(\Omega_\e)}.
\ea
Together with the assumptions on the initial data and the external force in \eqref{ini-force}, we deduce
\begin{equation}\nn%\label{est-u-0}
\|\sqrt{\rho_\e}\uu_\e \|_{L^\infty(0,T;L^2(\Omega_\varepsilon))} \leq C,
\quad\| \nabla \uu_\e\| _{L^2(0,T;L^2(\Omega_\e))} \leq C.
\end{equation}
Since ${\uu}_\varepsilon \in L^{2}(0,T; W^{1,2}_{0}(\Omega_{\e}))$ has zero trace on the boundary, its zero extension has the estimates:
\begin{equation}\label{est-u}
\|\sqrt{\widetilde\rho_\e}\widetilde \uu_\varepsilon \| _{L^\infty(0,T;L^2(\Omega))} \leq C,
\quad\| \widetilde \uu_\varepsilon \| _{L^2 (0,T;W_0^{1,2}(\Omega))} \leq C.
\end{equation}
Thus, up to a subsequence, there holds the convergence
\ba\label{thm1-conv-0}
{\widetilde {\uu}_\varepsilon } \to {\uu} \ \mbox{weakly in} \  L^2(0,T;W_0^{1,2}(\Omega)),
\ea
which is exactly $\eqref{thm1-conv}_{2}$ in Theorem \ref{thm-1}.
By \eqref{est-density} and \eqref{est-u},  we have
\ba\label{est-rho u}
\| \widetilde\rho_\e\widetilde \uu_\e  \|_{L^2 L^6}
&\leq \|\rho^0_\e \|_{L^\infty} \| \widetilde \uu_\e \|_{L^2 L^6}\leq C,\\
\| \widetilde\rho_\e\widetilde \uu_\e  \|_{L^\infty L^2}&=
\| \sqrt{\widetilde\rho_\e} \sqrt{\widetilde\rho_\e}\widetilde \uu_{\e} \|_{L^\infty L^2}\leq
\|\sqrt{\rho^0_\e} \|_{L^\infty} \| \sqrt{\widetilde\rho_\e}\widetilde \uu_\e\|_{L^\infty L^2}\leq C.
\ea

\section{The continuity equation in homogeneous domain}
In this section, we will derive the limit continuity equation in $(0,T)\times\Omega$. We shall use the following lemma, see for example Lemma 6.8 in \cite{Novotn} and Section 2.2.6 in \cite{Feireisl-Novotny}: \begin{lemma}\label{extend-density}
let $\Omega$ be a bounded Lipschitz domain in $\RR^N, N\geq2$. Let $\rho\in L^2((0,T)\times\Omega), \uu\in L^2(0,T;W_0^{1,2}(\Omega))$ and $f\in L^1((0,T)\times\Omega)$ satisfy
\ba\nn
\partial_t \rho +\dive (\rho\uu) = f,\ \mbox{in}\ \mathcal{D}'((0,T)\times\Omega).
\ea
Then, prolonging $(\rho,\uu,f)$ by $(0,0,0)$ outside $\Omega$,
\ba\nn
\partial_t \rho +\dive (\rho\uu) = f,\ \mbox{in}\ \mathcal{D}'((0,T)\times\RR^N).
\ea
\end{lemma}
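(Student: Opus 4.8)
The plan is to verify the extended equation directly against test functions. Writing $\widetilde\rho,\widetilde\uu,\widetilde f$ for the zero prolongations, we must show that for every $\varphi\in C_c^\infty((0,T)\times\RR^N)$,
\ba\nn
&-\int_0^T\!\!\int_{\RR^N}\widetilde\rho\,\partial_t\varphi\,\dd x\dd t-\int_0^T\!\!\int_{\RR^N}\widetilde\rho\,\widetilde\uu\cdot\nabla\varphi\,\dd x\dd t\\
&\qquad=\int_0^T\!\!\int_{\RR^N}\widetilde f\,\varphi\,\dd x\dd t.
\ea
Since all extensions are supported in $\overline\Omega$, each integral collapses to one over $\Omega$; the only difficulty is that $\varphi$ need not vanish on $\partial\Omega$, so it is not itself an admissible test function for the equation on $\Omega$. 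First I would remove this defect by localization: fix a smooth spatial cutoff $\chi_\delta\in C_c^\infty(\Omega)$ with $0\le\chi_\delta\le1$, $\chi_\delta\equiv1$ on $\{d>2\delta\}$ and $\chi_\delta\equiv0$ on $\{d<\delta\}$, where $d(x)=\mathrm{dist}(x,\partial\Omega)$, satisfying $|\nabla\chi_\delta|\le C/\delta$ with support in $\{\delta<d<2\delta\}$. Then $\varphi\chi_\delta\in C_c^\infty((0,T)\times\Omega)$ is admissible in the hypothesis.

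Testing the assumed equation with $\varphi\chi_\delta$ and using that $\chi_\delta$ is time-independent yields
\ba\nn
&-\int_0^T\!\!\int_\Omega\rho\,\chi_\delta\,\partial_t\varphi\,\dd x\dd t-\int_0^T\!\!\int_\Omega\rho\uu\cdot\chi_\delta\nabla\varphi\,\dd x\dd t\\
&\qquad-\int_0^T\!\!\int_\Omega\rho\uu\cdot\varphi\,\nabla\chi_\delta\,\dd x\dd t=\int_0^T\!\!\int_\Omega f\,\varphi\,\chi_\delta\,\dd x\dd t.
\ea
As $\delta\to0$ the first, second and right-hand terms converge to their counterparts without $\chi_\delta$ by dominated convergence, since $\chi_\delta\to1$ pointwise with $0\le\chi_\delta\le1$ and the integrands $\rho\,\partial_t\varphi$, $\rho\uu\cdot\nabla\varphi$, $f\varphi$ are integrable on $(0,T)\times\Omega$. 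It therefore remains to show that the boundary-layer term $\int_0^T\!\int_\Omega\rho\uu\cdot\varphi\,\nabla\chi_\delta$ tends to zero.

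This is the heart of the argument and the place where the hypothesis $\uu\in L^2(0,T;W_0^{1,2}(\Omega))$ is essential. Since $\Omega$ is Lipschitz, the Hardy inequality $\int_\Omega |\uu|^2/d^2\,\dd x\le C\int_\Omega|\nabla\uu|^2\,\dd x$ holds for a.e.\ $t$. On the support $\{\delta<d<2\delta\}$ of $\nabla\chi_\delta$ one has $1/\delta\le 2/d$, so by Cauchy--Schwarz, for a.e.\ $t$,
\ba\nn
\Big|\int_\Omega\rho\uu\cdot\varphi\,\nabla\chi_\delta\,\dd x\Big|\le C\|\varphi\|_{L^\infty}\Big(\int_{\{\delta<d<2\delta\}}|\rho|^2\,\dd x\Big)^{\frac12}\Big(\int_\Omega\frac{|\uu|^2}{d^2}\,\dd x\Big)^{\frac12}.
\ea
The second factor is controlled by $C\|\nabla\uu(t)\|_{L^2(\Omega)}$, while the first factor tends to $0$ as $\delta\to0$ because $|\{d<2\delta\}|\to0$ and $|\rho(t,\cdot)|^2$ is integrable; hence the spatial integral vanishes pointwise in $t$. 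The right-hand side above is bounded by $C\|\varphi\|_{L^\infty}\|\rho(t)\|_{L^2(\Omega)}\|\nabla\uu(t)\|_{L^2(\Omega)}$, which lies in $L^1(0,T)$ by Cauchy--Schwarz in time; dominated convergence then gives $\int_0^T\!\int_\Omega\rho\uu\cdot\varphi\,\nabla\chi_\delta\to0$, completing the proof. The main obstacle is precisely this boundary-layer estimate: without the zero-trace information encoded in $W_0^{1,2}$, the factor $1/\delta$ coming from $\nabla\chi_\delta$ could not be absorbed, and the Hardy inequality is exactly what converts the vanishing trace of $\uu$ into the decay needed in the shrinking collar around $\partial\Omega$.
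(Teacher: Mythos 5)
Your proof is correct. The paper itself does not prove this lemma --- it quotes it as known, citing Lemma 6.8 of Novotn\'y--Stra\v{s}kraba and Section 2.2.6 of Feireisl--Novotn\'y --- and your argument (testing with $\varphi\chi_\delta$, passing to the limit by dominated convergence, and killing the boundary-layer term $\int\!\!\int \rho\,\uu\cdot\varphi\,\nabla\chi_\delta$ via the Hardy inequality on a Lipschitz domain, which is exactly where the zero trace of $\uu\in L^2(0,T;W_0^{1,2}(\Omega))$ enters) is essentially the standard proof given in those references, so there is nothing to correct.
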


Combining the assumptions in Theorem \ref{thm-1} and using Lemma \ref{extend-density} above,  the extension $(\widetilde\rho_\e,\uue)$ satisfies the equation:
\ba\nn%\label{cont-equa}
\partial_t \widetilde\rho_\e +\dive (\widetilde\rho_\e\uue) = 0,\ \mbox{in}\ \mathcal{D}'((0,T)\times\RR^3).
\ea

By $\left\| \rho_\e  \right\|_{L^\infty (0,T;L^\infty(\Omega_\e))} = \left\| \rho_\e^0 \right\|_{L^\infty(\Omega_\e)}$ and the initial value conditions  \eqref{ini-force}, we have $0\leq\widetilde\rho_\e\leq C$ a.e. on $(0,T)\times\Omega$.  The convergence of $\widetilde\rho_\e^0$ also holds in $L^p(\Omega)$ for all $1\leq p\leq\infty$. The convergence of
$\uue$ follows from \eqref{thm1-conv-0}. In conclusion, we have properties as follows:
\ba\label{assume-compact}
&0\leq\widetilde\rho_\e\leq C \ \mbox{(a.e.) on}\ (0,T)\times\Omega,\\
&\dive \uue =0\ \mbox{(a.e.) on}\ (0,T)\times\Omega,\quad \|\widetilde \uu_\varepsilon \| _{L^2 (0,T;W_0^{1,2}(\Omega))} \leq C,\\
& \partial_t \widetilde\rho_\e +\dive (\widetilde\rho_\e\uue) = 0,\ \mbox{in}\ \mathcal{D}'((0,T)\times\RR^3 ).\\
&\widetilde\rho_\e^0\to\rho^0\ \mbox{strongly in}\ L^1(\Omega),
\quad\uue\to\uu\ \mbox{weakly in}\ L^2 (0,T;W_0^{1,2}(\Omega)).
\ea
Recall the following compactness result (see Theorem 2.4 in \cite{Lions1}): 
\begin{proposition}\label{compact-density}
If \eqref{assume-compact} holds, then $\widetilde\rho_\e$ converges strongly in $C([0,T];L^p(\Omega))$ for all $1\leq p<\infty$ to the unique solution $\rho$ bounded on $(0,T)\times\Omega$, where
\ba\label{compact-density-1}
\begin{cases}\nn
\partial_t \rho +\dive (\rho\uu) = 0,&\ \mbox{in}\ \mathcal{D}'((0,T)\times\Omega),\\
\rho\in C([0,T];L^1(\Omega)), &\ \rho(0)=\rho^0 \ \mbox{(a.e.) in} \ \Omega.\\
\end{cases}
\ea
\end{proposition}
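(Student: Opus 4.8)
The plan is to combine an Aubin--Lions argument in a negative-order Sobolev space with the DiPerna--Lions renormalization theory; the essential difficulty is that \eqref{assume-compact} supplies no spatial regularity for $\widetilde\rho_\e$ beyond the uniform $L^\infty$ bound, so strong compactness cannot come from Aubin--Lions alone and must be extracted from the transport structure. First I would pass to weak limits: from $0\le\widetilde\rho_\e\le C$ one extracts $\widetilde\rho_\e\rightharpoonup\rho$ weakly-$*$ in $L^\infty((0,T)\times\Omega)$, and likewise a weak-$*$ limit $\widetilde\rho_\e^{2}\rightharpoonup\overline{\rho^{2}}$, where $\overline{\rho^{2}}\ge\rho^{2}$ a.e.\ by weak lower semicontinuity of the convex map $s\mapsto s^{2}$. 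The whole scheme reduces to proving the reverse inequality $\overline{\rho^{2}}=\rho^{2}$, which is equivalent to strong $L^{2}$ convergence of $\widetilde\rho_\e$.

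Next I would establish compactness in a negative space. Writing $\p_t\widetilde\rho_\e=-\dive(\widetilde\rho_\e\uue)$ and using $\|\widetilde\rho_\e\uue\|_{L^2 L^6}\le C$ from \eqref{est-rho u}, the time derivative is bounded in $L^2(0,T;W^{-1,6}(\Omega))\hookrightarrow L^2(0,T;W^{-1,2}(\Omega))$, while $\widetilde\rho_\e$ is bounded in $L^\infty(0,T;L^2(\Omega))$. Since $L^2(\Omega)\hookrightarrow\hookrightarrow W^{-1,2}(\Omega)$ is compact, the Aubin--Lions--Simon lemma yields $\widetilde\rho_\e\to\rho$ strongly in $C([0,T];W^{-1,2}(\Omega))$. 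Pairing this strong convergence with the weak convergence $\uue\rightharpoonup\uu$ in $L^2(0,T;W^{1,2}_0(\Omega))$ through the compensated-compactness product lemma gives $\widetilde\rho_\e\uue\rightharpoonup\rho\uu$ in $\mathcal{D}'$, so that passing to the limit in the weak continuity equation of \eqref{assume-compact} produces $\p_t\rho+\dive(\rho\uu)=0$ with $\rho(0)=\rho^0$.

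The heart of the argument is then to identify $\overline{\rho^{2}}$ with $\rho^{2}$. Because $\uue$ is divergence free, $\widetilde\rho_\e$ is renormalized, so $\widetilde\rho_\e^{2}$ solves $\p_t\widetilde\rho_\e^{2}+\dive(\widetilde\rho_\e^{2}\uue)=0$ on $\RR^3$. Repeating the Aubin--Lions and compensated-compactness steps for $\widetilde\rho_\e^{2}$ (again bounded in $L^\infty(0,T;L^2)$, with flux $\widetilde\rho_\e^{2}\uue$ bounded in $L^2 L^6$) shows that $\overline{\rho^{2}}$ is a bounded distributional solution of $\p_t\overline{\rho^{2}}+\dive(\overline{\rho^{2}}\uu)=0$ with datum $(\rho^0)^{2}$. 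On the other hand, the limit $\rho$ — being bounded with $\uu\in L^2(0,T;W^{1,2})$ divergence free — is itself renormalized, so $\rho^{2}$ solves the same linear continuity equation with the same initial datum. By the DiPerna--Lions uniqueness of bounded solutions of the continuity equation, $\overline{\rho^{2}}=\rho^{2}$. Testing the weak-$*$ limits against $1$ and against $\rho\in L^1$ then gives $\int_0^T\!\!\int_\Omega|\widetilde\rho_\e-\rho|^2\to0$, i.e.\ strong convergence in $L^2((0,T)\times\Omega)$, and interpolation with the uniform $L^\infty$ bound upgrades this to $L^p((0,T)\times\Omega)$ for every $1\le p<\infty$.

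Finally I would sharpen the time regularity. For every $t$, the strong $C([0,T];W^{-1,2})$ convergence gives $\widetilde\rho_\e(t)\rightharpoonup\rho(t)$ weakly in $L^2(\Omega)$; combined with the conservation of norms $\|\widetilde\rho_\e(t)\|_{L^2}=\|\widetilde\rho_\e^0\|_{L^2}\to\|\rho^0\|_{L^2}=\|\rho(t)\|_{L^2}$, weak-plus-norm convergence in the Hilbert space $L^2(\Omega)$ forces strong convergence for each fixed $t$; a routine equicontinuity argument (using the uniform bound together with density of smooth functions in the pairing) makes this uniform in $t$, giving $C([0,T];L^2(\Omega))$, and interpolation with the $L^\infty$ bound then yields $C([0,T];L^p(\Omega))$ for all $1\le p<\infty$. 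Uniqueness of the limit $\rho$ as a bounded solution is again the DiPerna--Lions uniqueness theorem. I expect the genuine obstacle to be precisely the identification $\overline{\rho^{2}}=\rho^{2}$: since the density carries no spatial compactness, one cannot pass to the limit in the nonlinear flux $\widetilde\rho_\e^{2}\uue$ by soft arguments, and the equality can only be closed through the renormalization-plus-uniqueness mechanism, which is the analytic core of the statement.
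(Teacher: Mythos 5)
The paper offers no internal proof of this proposition: it is quoted directly as Theorem 2.4 of Lions \cite{Lions1}, and the paper's only work at this point is verifying that the hypotheses \eqref{assume-compact} hold for the extended solutions. Your argument is therefore not competing with a proof in the paper but reconstructing the cited one, and it does so correctly: the renormalization-plus-uniqueness mechanism you describe (both $\widetilde\rho_\e$ and $\widetilde\rho_\e^{2}$ solve linear continuity equations, because $\uue\in L^2(0,T;W^{1,2})$ and $\widetilde\rho_\e\in L^\infty$ make every bounded distributional solution renormalized; Aubin--Lions in $C([0,T];W^{-1,2}(\Omega))$ plus the weak convergence of $\uue$ passes both equations to the limit; DiPerna--Lions uniqueness of bounded solutions of the linear equation forces $\overline{\rho^{2}}=\rho^{2}$, which converts weak-$*$ into strong convergence) is precisely the engine behind Lions' theorem. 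Two details in your sketch should be justified rather than asserted. First, the norm conservation $\|\widetilde\rho_\e(t)\|_{L^2}=\|\widetilde\rho_\e^0\|_{L^2}$ is not among the hypotheses \eqref{assume-compact}; you obtain it by integrating the renormalized equation for $\widetilde\rho_\e^{2}$ over $\RR^3$, using that the zero extension of $\uue$ is divergence free with zero trace, and the same remark applies to $\|\rho(t)\|_{L^2}=\|\rho^0\|_{L^2}$ for the limit. Second, the ``routine equicontinuity argument'' promoting strong $L^2(\Omega)$ convergence at each fixed $t$ to convergence in $C([0,T];L^2(\Omega))$ needs the uniform H\"older bound $\|\widetilde\rho_\e(t)-\widetilde\rho_\e(s)\|_{W^{-1,2}(\Omega)}\leq C|t-s|^{1/2}$ (available because $\p_t\widetilde\rho_\e=-\dive(\widetilde\rho_\e\uue)$ is bounded in $L^2(0,T;W^{-1,2}(\Omega))$) together with the continuity $\rho\in C([0,T];L^2(\Omega))$ of the limit; with these in hand the contradiction argument closes. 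Modulo these routine completions your proof is sound, and your diagnosis that the entire difficulty sits in the identification $\overline{\rho^{2}}=\rho^{2}$ — unobtainable by soft compactness since \eqref{assume-compact} provides no spatial regularity for the density — is exactly right.
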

Using Proposition \ref{compact-density}, we obtain the compactness result
\ba\label{conv-rho}
\widetilde\rho_\e \to \rho \ \mbox{strongly in} \ C([0,T];L^p(\Omega))\ \mbox{for all}\ 1\leq p<\infty,
\ea
which is exactly  $\eqref{thm1-conv}_{1}$  in Theorem \ref{thm-1}, with $\rho\in C([0,T];L^p(\Omega)), \ 1\leq p<\infty, \ \rho(0)=\rho^0 \ \mbox{(a.e.) in} \ \Omega.$
Moreover, Proposition \ref{compact-density} also shows the limit equation of the continuity equation in Theorem \ref{thm-1}, i.e.
\ba\nn
\partial_t \rho +\dive (\rho\uu)= 0.
\ea
\begin{remark}\label{remark-2}  Lions \cite{Lions1} also gave the convergence of $\sqrt{\widetilde\rho_\e}$:
\ba\label{conv-rho-1}
\sqrt{\widetilde\rho_\e} \to \sqrt{\rho} \ \mbox{strongly in} \ C([0,T];L^p(\Omega))\ \mbox{for all}\ 1\leq p<\infty.
\ea
\end{remark}
\section{The momentum equations in homogeneous domain}
To find the limit equations of momentum equations in $\Omega$, an idea is to find a family of functions ${\{ {g_\varepsilon}\} _{\varepsilon>0}}$ that vanish on the holes and converge to $1$ in some Sobolev space $W^{1,q}(\Omega)$, then decompose $\varphi$ as
\be\nn%\label{dec-g}
\varphi  = {g_\varepsilon }\varphi + (1-{g_\varepsilon})\varphi.
\ee
Thus $g_\varepsilon\varphi$ can be treated as a test function for the momentum equations in $\Omega_\varepsilon$. While for the terms related the other part $(1-g_\varepsilon)\varphi$, we show that they are small and converge to zero. However, such a decomposition destroyed the divergence free property of $\varphi$: $\dive (g_{\e} \varphi) \neq 0$. To overcome it, we recall the following Bogovskii type operator in perforated domain $\Omega_{\e}$ (see Proposition 2.2 in \cite{Lu2} and Theorem 2.3 in \cite{Diening-Feireisl-Lu}):
\begin{lemma}\label{lem-div}
Let $\Omega_\e$ be defined through \eqref{1-hole}-\eqref{1-domain} with $a_{\e} = \e^{\alpha}$, $\alpha\geq 1$. Then there exists a linear operator
\ba\nn%\label{pro-div0}
\calB_\e : L_0^{q}(\Omega_\e) \to W_0^{1,q}(\Omega_\e; \RR^3),\ 1 < q < \infty,
\ea
such that for any $f\in L_0^{q}(\Omega_\e)$,
\ba\label{pro-div1}
\dive \calB_\e(f) =f \ \mbox{in} \ \Omega_\e,~~\|\calB_\e(f)\|_{W_0^{1,q}(\Omega_\e; \RR^3)}\leq C\, \left(1+\e^{\frac{(3-q)\aaa-3}{q}}\right)\|f\|_{L^q(\Omega_\e)},
\ea
for some constant $C$ independent of $\e$.

 For any $r >3/2$, the linear operator $\mathcal B_\e$ can be extended as a linear operator from $\{\dive\vg: \vg\in L^r(\Omega_\e;\RR^3), \vg \cdot {\bf n}=0 \mbox{ on } \p \Omega_\e\}$ to $L^r(\Omega_\e;\RR^3)$ satisfying
\be\nn%\label{pro-div2}
 \|\mathcal{B}_\e(\dive {\bf g})\|_{L^{r}(\Omega_\e;\RR^3)}\leq C \|{\bf g}\|_{L^r(\Omega_\e;\RR^3)},
\ee
for some constant $C$ independent of $\e$.
\end{lemma}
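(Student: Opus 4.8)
The plan is to reduce the perforated divergence problem to the unperforated one by a two-step construction: a global Bogovskii solve on $\Omega$ followed by local, hole-concentrated corrections, with all constants made uniform in $\e$ by rescaling each perforation to a fixed reference cell. First I would fix a Bogovskii operator $\calB_\Omega:L_0^q(\Omega)\to W_0^{1,q}(\Omega)$ on the unperforated domain, solving $\dive\calB_\Omega(f)=f$ with a constant depending only on $\Omega$ and $q$. Given $f\in L_0^q(\Omega_\e)$, I extend it by zero to $\tilde f\in L_0^q(\Omega)$ (the zero mean survives because $\tilde f$ vanishes on the holes) and set $v:=\calB_\Omega(\tilde f)$, so that $\dive v=f$ in $\Omega_\e$ and $\|v\|_{W^{1,q}(\Omega)}\le C\|f\|_{L^q(\Omega_\e)}$. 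The field $v$ solves the divergence equation but need not have zero trace on the hole boundaries $\partial T_{\e,k}$.

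The heart of the argument is to correct $v$ cell by cell so as to restore the boundary condition without spoiling the divergence. On the $k$-th cell I write $v=\bar v_k+(v-\bar v_k)$, where $\bar v_k$ is the average of $v$ over the spacing-scale ball around $x_{\e,k}$. The oscillatory part $v-\bar v_k$ is removed with a cut-off $\chi_k$ supported in the annulus $B(x_{\e,k},\delta_2\e^\alpha)\setminus B(x_{\e,k},\delta_1\e^\alpha)$ between two of the nested balls in \eqref{1-hole}, its divergence error being absorbed by a local Bogovskii operator on the rescaled annulus; by Poincar\'e at the hole scale this contribution is bounded by $\|\nabla v\|_{L^q}$ and accounts for the constant ``$1$'' in \eqref{pro-div1}. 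The constant part $\bar v_k$ is realized by a divergence-free field $\psi_k$ that equals $\bar v_k$ on $\partial T_{\e,k}$ and vanishes on $\partial B(x_{\e,k},\delta_2\e^\alpha)$; such a $\psi_k$ exists precisely because the flux balances, $\int_{\partial T_{\e,k}}v\cdot\mathbf{n}=\int_{T_{\e,k}}\tilde f=0$, which is the one point where the vanishing of $\tilde f$ on the holes is used. Setting $\calB_\e(f)$ to be $v$ minus the sum over cells of $\chi_k(v-\bar v_k)$, the field $\psi_k$, and a local Bogovskii corrector of the remaining divergence error then yields a field in $W_0^{1,q}(\Omega_\e)$ with $\dive\calB_\e(f)=f$.

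I expect the sharp $\e$-dependence of the constant to be the main obstacle, and it is governed entirely by the constant parts $\psi_k$. Rescaling the reference construction to the hole scale $\e^\alpha$ shows $\|\nabla\psi_k\|_{L^q}\le C\e^{\frac{3\alpha}{q}-\alpha}|\bar v_k|$, and summing the disjoint cell contributions in $\ell^q$ over the $|K_\e|\sim\e^{-3}$ holes, together with $\sum_k|\bar v_k|^q\lesssim\e^{-3}\|v\|_{L^q(\Omega)}^q\lesssim\e^{-3}\|f\|_{L^q}^q$, produces $\e^{\frac{3\alpha}{q}-\alpha}\cdot\e^{-\frac3q}\|f\|_{L^q}=\e^{\frac{(3-q)\alpha-3}{q}}\|f\|_{L^q}$, exactly the claimed power. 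The delicate points are: choosing the averaging radius at the spacing scale rather than the hole scale (this is what converts the naive $\e^{-\alpha}$ into the much smaller power), checking that the residual ``constant difference'' between the two averaging scales is genuinely lower order, and performing the summation in $\ell^q$ uniformly over the regimes $q<3$ and $q>3$.

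For the second assertion I would argue by duality combined with the singular-integral structure of the Bogovskii map. Writing $\calB_\e(\dive\vg)$ through the same global-plus-local construction and estimating it in $L^r$ via $\sup\{\int\calB_\e(\dive\vg)\cdot\phi:\|\phi\|_{L^{r'}}\le1\}$, one uses that $\calB_\Omega(\dive\,\cdot)$ is a Calder\'on--Zygmund operator of convolution type, bounded on $L^r$ for $1<r<\infty$, while the condition $\vg\cdot\mathbf{n}=0$ on $\partial\Omega_\e$ kills the boundary terms in the integration by parts. The correction step is now measured in $L^r$ rather than $W^{1,r}$, so that the cut-off gradient $\e^{-\alpha}$ is balanced by the annulus-volume factor $\e^{3\alpha/r}$ together with the $\ell^r$-summation; these powers cancel to an $\e$-uniform constant exactly at the threshold $r>3/2=d/(d-1)$ in dimension three. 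I expect this cancellation at $r=3/2$ to be the most technical point, since it is the critical exponent where the hole-scale gradient and the local volume balance in the $L^r$ scaling.
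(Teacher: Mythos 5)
First, a point of reference: the paper itself does not prove this lemma --- it is quoted from Proposition 2.2 of \cite{Lu2} and Theorem 2.3 of \cite{Diening-Feireisl-Lu} --- so your proposal can only be compared with the proofs in those references. For the first assertion (the $W^{1,q}$ bound) your construction is essentially theirs: a global Bogovskii solve of the zero-extended datum, hole-by-hole removal of the trace by a cut-off at scale $\e^\aaa$ plus a local Bogovskii correction on the annulus, and a separate treatment of the cell averages, whose correcting fields carry the cost $\e^{3\aaa/q-\aaa}$ per hole; your power counting $\e^{3\aaa/q-\aaa}\cdot\e^{-3/q}=\e^{((3-q)\aaa-3)/q}$ is exactly right. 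Two corrections are needed, though. (i) The flux identity $\int_{\partial T_{\e,k}}v\cdot\mathbf{n}\,\dd S=\int_{T_{\e,k}}\widetilde f\,\dd x=0$ is not what makes $\psi_k$ exist: for \emph{constant} boundary data the compatibility condition $\bar v_k\cdot\int_{\partial T_{\e,k}}\mathbf{n}\,\dd S=0$ holds automatically. The flux identity is instead what makes the local Bogovskii correction of the cut-off error $\nabla\chi_k\cdot(v-\bar v_k)+\chi_k f$ solvable (zero mean on the annulus). (ii) Your ``Poincar\'e at the hole scale'' applied to $v-\bar v_k$, with $\bar v_k$ the \emph{spacing-scale} average, is precisely the delicate point you flag, and it does not follow from Poincar\'e alone: one needs the dyadic telescoping bound $\bigl|(v)_{B_{\e^\aaa}}-(v)_{B_{\e}}\bigr|\le C\e^{\aaa(1-3/q)}\|\nabla v\|_{L^q(B_\e)}$ for $q<3$ (the sum over scales is dominated by the smallest one), which makes this residual exactly $O(1)$; for $q>3$ it is dominated by the largest scale and is of lower order. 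With that supplied, part one is complete and matches the cited proofs.

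The second assertion is where your proposal has genuine gaps. You cannot run the construction on $\dive\vg$ as if it were an $L^r$ datum: $\dive\vg$ is not controlled by $\|\vg\|_{L^r}$, so the operator must be defined on smooth tangential $\vg$ and every occurrence of $\dive\vg$ in the corrections rewritten as $\chi_k\dive\vg=\dive(\chi_k\vg)-\nabla\chi_k\cdot\vg$, the first piece being handled by the reference-annulus estimate $\|\calB_{D}(\dive\mathbf{h})\|_{L^r(D)}\le C\|\mathbf{h}\|_{L^r(D)}$ for fields with vanishing normal trace --- this is where the hypothesis $\vg\cdot\mathbf{n}=0$ on $\partial T_{\e,k}$ actually enters, not through ``killing boundary terms in an integration by parts.'' Relatedly, your duality formulation does not close: Helmholtz-decomposing the test function $\phi$, only its gradient part can be converted into $\int\vg\cdot\nabla p$ via $\dive\calB_\e(\dive\vg)=\dive\vg$; its solenoidal part pairs with a component of $\calB_\e(\dive\vg)$ over which duality gives no control, so one is forced back into the explicit construction anyway. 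Finally, your explanation of the threshold is incorrect: the powers you invoke --- cut-off gradient $\e^{-\aaa}$, volume factor $\e^{3\aaa/r}$, and the $\ell^r$-sum $\e^{-3/r}$ over $\sim\e^{-3}$ holes --- combine to $\e^{\aaa(3-r)/r-3/r}$, which is uniform only under the $\aaa$-dependent condition $r\le 3-3/\aaa$; no bookkeeping of these $\aaa$-dependent powers can produce the $\aaa$-independent exponent $3/2$. The cancellation that actually makes the $L^r$ bound uniform is per hole and valid for every $r$: the factor $\e^{-\aaa}$ from $\nabla\chi_k$ is matched exactly by the factor $\e^{+\aaa}$ gained when the local Bogovskii operator (first-type bound plus Poincar\'e, both scale-invariant on the rescaled annulus) is applied to $\nabla\chi_k\cdot(\,\cdot\,)$. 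So while the statement you aim at is true, the $L^r$ part of your sketch would not assemble into a proof as written.
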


To derive the limit momentum equations in $[0,T)\times\Omega$, we firstly give the following proposition:\begin{proposition}\label{moment-equa}
Under the assumptions in Theorem \ref{thm-1}, the extension $(\widetilde\rho_\e,\widetilde \uu_\varepsilon)$ satisfies the following equations:
\ba\label{weak-momentum1}
& \int_0^T \int_{\Omega}-\widetilde{\rho}_\e\widetilde{\uu}_\e \cdot \partial_t\varphi - \widetilde{\rho}_\e\widetilde{\uu}_\varepsilon  \otimes \widetilde{\uu}_\varepsilon :\nabla \varphi+\mu\nabla\widetilde{\uu}_\e:\nabla \varphi   \dd x\dd t  \\
& = \int_0^T \int_{\Omega} \widetilde{\rho}_\e\widetilde{\ff}_\e   \cdot \varphi \dd x\dd t +\int_{\Omega} \widetilde{\rho}_\e^0\widetilde{\uu}_\e^0  \cdot \varphi (0,x)\dd x+ \l\GG_\varepsilon, \varphi \r,
\ea
where $\varphi\in C^\infty_c([0,T)\times\Omega ;\mathbb{R}^3)$ with $\dive  \varphi =0$. Moreover,
\begin{equation}\label{est-F-3D}
|\l\GG_\varepsilon, \varphi \r| \leq C\varepsilon ^\sigma \big(\|  \partial _t \varphi \| _{L^{\frac 43} L^{2}}
+\| \nabla \varphi\| _{L^{4} L^{r_1}}+\| \varphi (0,x)\| _{L^{r_2}} \big), \ \forall \, \varphi\in C_{c}^{\infty}([0,T)\times\Omega;\mathbb{R}^3), \ \dive \varphi =0.
\end{equation}
Here $\sigma:=((3 - q)\alpha  - 3)/q >0$ for some $q>2$ close to $2$, and $ 2<r_1<3$ is given by ${1}/{r_{1}} = {5}/{6} -{1}/{q}$, $r_2$ is given by $1/2=1/q+1/{r_2}$. 
\end{proposition}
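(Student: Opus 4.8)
The plan is to transfer the weak momentum formulation \eqref{weak-momentum} from $\Omega_\e$ to $\Omega$ by the decomposition $\varphi = g_\e \varphi + (1-g_\e)\varphi$ described above, absorbing the discrepancy into the error functional $\GG_\e$. First I would construct the cutoff family $\{g_\e\}$: for each hole $T_{\e,k}$ let $g_\e$ vanish on $B(x_{\e,k},\delta_1\e^\alpha)$, equal $1$ outside $B(x_{\e,k},\delta_2\e^\alpha)$, and interpolate smoothly in the annulus, so that $g_\e\in C^\infty_c$, $0\leq g_\e\leq 1$, $g_\e=1$ away from the holes, and $\|\nabla g_\e\|_{L^q(\Omega)}\leq C\e^{((3-q)\alpha-3)/q}=C\e^\sigma$ for $q>2$ (this is where $\alpha>3$ and the scale separation \eqref{1-hole} enter, giving $\sigma>0$). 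Since $g_\e\varphi$ is supported in $\Omega_\e$ but is no longer divergence free ($\dive(g_\e\varphi)=\nabla g_\e\cdot\varphi$, which has zero mean over $\Omega_\e$ as $\varphi\cdot{\bf n}=0$ on $\p\Omega_\e$), I would apply the Bogovskii operator of Lemma \ref{lem-div} to correct it, setting
\be\nn
\phi_\e := g_\e\varphi - \calB_\e(\nabla g_\e\cdot\varphi),
\ee
which is divergence free, lies in $W^{1,q}_0(\Omega_\e)$, and is an admissible test function for \eqref{weak-momentum}.

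Next I would substitute $\phi_\e$ into \eqref{weak-momentum}, then rewrite every integral back over $\Omega$ using the zero extensions. The leading terms reproduce exactly the four integrals in \eqref{weak-momentum1} (with $\mu=1$), and the remainder collects into $\l\GG_\e,\varphi\r$, which has the schematic form
\ba\nn
\l\GG_\e,\varphi\r = {}&\int_0^T\!\!\int_\Omega \widetilde\rho_\e\widetilde\uu_\e\cdot\p_t\big((1-g_\e)\varphi\big)\,\dd x\dd t
+ \int_0^T\!\!\int_\Omega \widetilde\rho_\e\widetilde\uu_\e\otimes\widetilde\uu_\e:\nabla\big((1-g_\e)\varphi\big)\,\dd x\dd t\\
&-\int_0^T\!\!\int_\Omega \nabla\widetilde\uu_\e:\nabla\big((1-g_\e)\varphi\big)\,\dd x\dd t
-\int_0^T\!\!\int_\Omega \widetilde\rho_\e\widetilde\ff_\e\cdot\big((1-g_\e)\varphi\big)\,\dd x\dd t \\
&-\int_\Omega \widetilde\rho_\e^0\widetilde\uu_\e^0\cdot\big((1-g_\e)\varphi\big)(0,x)\,\dd x
+ (\text{the five } \calB_\e\text{-correction terms}).
\ea
Each of these splits into a factor carrying $(1-g_\e)$ or $\nabla g_\e$ — small in the appropriate $L^q$ norm by the $\e^\sigma$ estimate above — times the uniform bounds already proved: \eqref{est-density} for $\widetilde\rho_\e$, \eqref{est-u} for $\widetilde\uu_\e$, \eqref{est-rho u} for the momentum, and the bound on $\widetilde\ff_\e$ from \eqref{ini-force}.

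The estimate \eqref{est-F-3D} then follows by Hölder in space and time with the stated conjugate exponents. The three norms of $\varphi$ appearing on the right are dictated by the three types of term: the $\p_t\varphi$ term pairs with $\widetilde\rho_\e\widetilde\uu_\e\in L^\infty L^2$ against $(1-g_\e)$, forcing the $L^{4/3}L^2$ norm; the $\nabla\varphi$ term pairs with the convection/viscous contributions, where $\nabla g_\e\in L^q$, $\widetilde\uu_\e\in L^2 W^{1,2}\hookrightarrow L^2 L^6$, and $\widetilde\rho_\e\widetilde\uu_\e\in L^2 L^6$ combine to yield the exponents $1/r_1 = 5/6 - 1/q$ and the time exponent $4$; the initial-data term pairs $\widetilde\rho_\e^0\widetilde\uu_\e^0\in L^2$ with $\nabla g_\e\in L^q$ to give $1/r_2 = 1/q + 1/2$.

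The main obstacle will be the convective and Bogovskii-correction terms. For the convection term one has a triple product $\widetilde\rho_\e\widetilde\uu_\e\otimes\widetilde\uu_\e$ against $\nabla\big((1-g_\e)\varphi\big) = -\nabla g_\e\,\varphi + (1-g_\e)\nabla\varphi$; the piece with $\nabla g_\e$ is the delicate one, since one must spend the full $\e^\sigma$ smallness of $\|\nabla g_\e\|_{L^q}$ while keeping the remaining factors uniformly bounded, which is exactly why $q$ is taken slightly above $2$ and the embedding $L^2 W^{1,2}\hookrightarrow L^2 L^6$ is invoked to balance the Hölder exponents. The Bogovskii terms require the second (negative-norm) estimate of Lemma \ref{lem-div}, applied to $\calB_\e(\nabla g_\e\cdot\varphi) = \calB_\e(\dive(g_\e\varphi))$ after checking the normal-trace condition, to show these corrections are controlled by $\|g_\e\varphi\|$ and are of the same order $\e^\sigma$. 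Verifying that every factor is genuinely $\e$-uniform and that the smallness is never consumed twice is the crux; once the exponent bookkeeping is arranged so that a single power of $\e^\sigma$ survives in each term, \eqref{est-F-3D} is immediate.
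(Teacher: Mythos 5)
Your overall strategy coincides with the paper's: the same cutoff family $g_\e$ with $\|\nabla g_\e\|_{L^q}\leq C\e^{\sigma}$, the same decomposition $\varphi=g_\e\varphi+(1-g_\e)\varphi$, the Bogovskii correction $\varphi_2=\calB_\e(\nabla g_\e\cdot\varphi)$, and term-by-term H\"older estimates against the uniform bounds. However, there is a genuine gap in your treatment of the Bogovskii correction terms. You propose to control them with the second ($L^r$) estimate of Lemma \ref{lem-div}, i.e. $\|\calB_\e(\dive(g_\e\varphi))\|_{L^r(\Omega_\e)}\leq C\|g_\e\varphi\|_{L^r(\Omega_\e)}$, and assert that this shows the corrections ``are of the same order $\e^\sigma$''. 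It does not: $\|g_\e\varphi\|_{L^r}$ tends to $\|\varphi\|_{L^r}$ and is $O(1)$, so this bound yields no smallness whatsoever. Nor can you repair it by writing $\dive(g_\e\varphi)=\dive((g_\e-1)\varphi)$ and applying the same estimate to $(g_\e-1)\varphi$ (whose norm \emph{is} small), because $(g_\e-1)\varphi\cdot{\bf n}=-\varphi\cdot{\bf n}\neq 0$ on the hole boundaries, so the normal-trace hypothesis of the lemma fails there. Moreover, the corrections coming from the convective and viscous terms involve $\nabla\varphi_2$, which the $L^r$ estimate does not control at all.

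The mechanism that actually works, and that the paper uses for all five correction terms, is the first estimate \eqref{pro-div1}: since $\nabla g_\e\cdot\varphi$ has zero mean over $\Omega_\e$ (because $g_\e\varphi$ --- not $\varphi$, as you wrote; $\varphi\cdot{\bf n}\neq0$ on the holes --- has vanishing normal trace on $\p\Omega_\e$), one gets
\be\nn
\|\varphi_2\|_{W^{1,p}_0(\Omega_\e)}\leq C\left(1+\e^{\frac{(3-p)\aaa-3}{p}}\right)\|\nabla g_\e\cdot\varphi\|_{L^{p}}\leq C\|\nabla g_\e\|_{L^q}\|\varphi\|_{L^{r}}\leq C\e^{\sigma}\|\varphi\|_{L^{r}},\qquad \frac1p=\frac1q+\frac1r,
\ee
with the prefactor bounded for the relevant exponents since $\alpha>3$, and with $\p_t\varphi_2=\calB_\e(\p_t\varphi\cdot\nabla g_\e)$ handling the time-derivative term; the $\e^\sigma$ smallness of every correction is inherited from $\nabla g_\e$ through H\"older, not from the $L^r$ extension estimate. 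A smaller bookkeeping slip of the same kind: for the $\p_t\varphi$ remainder you pair $\widetilde\rho_\e\widetilde\uu_\e\in L^\infty L^2$ with $\p_t\varphi\in L^{4/3}L^2$, which exhausts the spatial H\"older budget and leaves $(1-g_\e)$ only in $L^\infty$, again with no decay; you must first interpolate the momentum to $L^4L^3$ (legitimate by \eqref{est-rho u}) so that the factor $\|1-g_\e\|_{L^6}\leq C\e^{\sigma}$ can be spent, which is exactly how the paper estimates this term.
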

\begin{proof} Let $\varphi  \in C_c^\infty ([0,T)\times\Omega;{\mathbb{R}^3})$ with $\dive\varphi = 0$. By the description of the holes in \eqref{1-hole},  there exist cut-off functions $\{ g_\varepsilon \}_{\varepsilon>0} \subset C^\infty(\RR^{3})$ such that $0\leq g_{\e} \leq 1$ and
\begin{equation}\nn
  g_\varepsilon= 0 \  \mbox{on}  \  \bigcup_{k \in K_\varepsilon} T_{\e,k},\quad g_\varepsilon = 1  \  \mbox{on}  \  (\bigcup_{k \in K_\varepsilon} B(x_{\e,k},\delta_0 \varepsilon^\alpha))^c,\quad |\nabla g_{\e}| \leq C \e^{-\alpha}.
\end{equation}
Then for each $1\leq q \leq \infty$,  there hold
\begin{equation}\label{est-g}
\|  g_\varepsilon- 1\| _{L^q(\RR^{3})} \leq C \varepsilon^{\frac{3\alpha-3}{q}},\quad
\|  \nabla g_\varepsilon\| _{L^q(\RR^{3})} \leq C\varepsilon ^{\frac{3\alpha-3}{q}-\alpha}.
\end{equation}
Now we estimate
\ba\nn
I^\varepsilon &:= \int_0^T \int_\Omega \widetilde\rho_\e\widetilde \uu_\e \partial_t \varphi
+\widetilde\rho_\e\widetilde \uu_\e  \otimes \widetilde \uu_\e :\nabla \varphi
-\mu\nabla \widetilde \uu_\e :\nabla \varphi + \widetilde\rho_\e\widetilde \ff_\e \varphi \dd x \dd t+\int_{\Omega} \widetilde{\rho}_\e^0\widetilde{\uu}_\e^0  \cdot \varphi (0,x)\dd x.
\ea

Firstly, we decompose $\varphi$ into $\varphi  = {g_\varepsilon }\varphi + (1-{g_\varepsilon})\varphi$, then 
\ba\nn
I^\e &=\int_0^T \int_{\Omega_\e} \rho_\e \uu_\e \partial_t (g_\e \varphi)
+ \rho_\e \uu_\e \otimes  \uu_\e:\nabla (g_\e\varphi)-\mu\nabla  \uu_\e:\nabla (g_\e\varphi ) +  \rho_\e\ff_\e (g_\varepsilon \varphi )\dd x \dd t\\
&+\int_{\Omega} \widetilde{\rho}_\e^0\widetilde{\uu}_\e^0  \cdot g_\varepsilon\varphi (0,x)\dd x+\sum_{j=1}^{5} {I_j}
\ea
with
\ba\nn
I_1 & = \int_0^T \int_\Omega \widetilde\rho_\e\widetilde \uu_\e(1-g_\e) \partial_t\varphi \dd x \dd t, \\
I_2 & = \int_0^T \int_\Omega \widetilde\rho_\e\widetilde \uu_\e \otimes \widetilde \uu_\e :(1-g_\e)\nabla \varphi-\widetilde\rho_\e\widetilde\uu_\e \otimes \widetilde \uu_\e:\nabla g_\e\otimes\varphi \dd x\dd t ,\\
I_3 &= -\mu\int_0^T \int_\Omega \nabla \widetilde \uu_\varepsilon :(1-g_\varepsilon)\nabla \varphi
- \nabla \widetilde \uu_\varepsilon:\nabla g_\varepsilon \otimes \varphi \dd x \dd t ,\\
I_4 &= -\int_0^T \int_\Omega \widetilde\rho_\e\widetilde \ff_\e (g_\varepsilon-1)\varphi \dd x\dd t,\\
I_5 &= -\int_{\Omega} \widetilde{\rho}_\e^0\widetilde{\uu}_\e^0  \cdot (g_\varepsilon-1)\varphi (0,x)\dd x.
\ea

Observing that
\ba\label{0-integrals}
\int_{\Omega_{\e}} \varphi  \cdot \nabla g_{\e}\dd x = \int_{\Omega_{\e}} \dive(\varphi g_{\e}) \dd x  = 0,
\ea
 we can apply Lemma \ref{lem-div} and introduce
\ba\nn
\varphi_{1} := g_\e\varphi - \varphi_{2},\quad  \varphi_{2} := \calB_{\e} (\dive(\varphi g_{\e})) = \calB_{\e} (\varphi  \cdot \nabla g_\e).
\ea
Then $\varphi_{1} \in C_{c}^{\infty}([0,T); W^{1,2}_{0}(\Omega_{\e}))$ and satisfy $\dive \varphi_{1} = 0$. Using the weak formulation \eqref{weak-momentum}, we have
\ba\nn
I^\varepsilon & = \int_0^T \int_{\Omega_\e}\rho_\e\uu_\e \partial_t \varphi_{1}
+ \rho_\e\uu_\e \otimes  \uu_\e:\nabla \varphi_{1}-\mu\nabla  \uu_\e:\nabla \varphi_{1} +\rho_\e\ff_\e \cdot  \varphi_{1} \dd x \dd t\\
&~~+\int_{\Omega _\e} \rho_\e^0\uu_\e^0  \cdot \varphi_{1} (0,x)\dd x+\sum_{ j=1}^{5} {I_j} +\sum_{ j=6}^{10} {I_j} \\
&=\sum_{ j=1}^{5} {I_j} +\sum_{ j=6}^{10} {I_j},
\ea
where
\ba
&I_6 =\int_0^T \int_{\Omega_\e}\rho_\e\uu_\e \partial_t \varphi_{2}  \dd x \dd t,\quad &&
I_7 = \int_0^T  \int_{\Omega_\e} \rho_\e\uu_\e \otimes  \uu_\e : \nabla \varphi_{2}   \dd x \dd t , \\
&I_8 =-\mu \int_0^T \int_{\Omega_\e} \nabla  \uu_\e:\nabla \varphi_{2}  \dd x \dd t ,\quad &&
I_9 = \int_0^T \int_{\Omega_{\e}}  \rho_\e\ff_\e \cdot  \varphi_{2}\dd x\dd t,\\
&I_{10} =\int_{\Omega _\e} \rho_\e^0\uu_\e^0  \cdot \varphi_{2} (0,x)\dd x.
\nonumber \ea

Now we estimate $I_{j}$ term by term.  Since $\alpha>3$, there exists $q\in(2, 3)$ close to $2$ such that
\be\nn
\sigma:=\frac{(3 - q)\alpha-3}{q} >0.
\ee
Let $q^*$ be the Sobolev conjugate component to $q$ with $\frac{1}{q^{*}} = \frac{1}{q} - \frac{1}{3}$. Clearly $6<q^{*}<\infty$.

By \eqref{est-rho u} and \eqref{est-g}, together with interpolation inequality  and Sobolev embedding theorem, we have
\ba\nn
|I_1| \leq C \| \widetilde\rho_\e \widetilde \uu_\e \| _{L^4 L^3}\| 1- g_\e \| _{L^6}\| \partial_t\varphi\| _{L^{\frac43} L^2}\leq C \e^\sigma\| \partial _t\varphi \| _{L^{\frac43} L^2} .
\ea

By \eqref{est-u} and \eqref{est-rho u},  using interpolation inequality  and  Sobolev embedding theorem again, we have
\ba
|I_2| \leq C\| \widetilde\rho_\e \widetilde \uu_\e  \otimes \widetilde \uu_\e \| _{L^{\frac{4}{3}} L^2}\big(
\|  g_\e- 1\| _{L^{q^{*}}}\|  \nabla \varphi \| _{L^4L^{r_{1}}} + \| \nabla g_\e\| _{L^q}\| \varphi \| _{L^{4} L^{r_2}}\big) \leq C \e ^\sigma\| \nabla \varphi \| _{L^{4} L^{r_1}} ,
\nonumber\ea
where
\ba\label{r1}
\frac{1}{r_{1}} = \frac{1}{2}  - \frac{1}{q^{*}} = \frac{5}{6} - \frac{1}{q}>\frac{1}{3}, \quad \frac{1}{r_{2}} = \frac{1}{2} - \frac{1}{q}  = \frac{1}{r_{1}^{*}}.
\ea

Similarly, we have 
\ba\nn
|I_3| \leq C\| \nabla \widetilde \uu_\varepsilon \| _{L^2 L^2} \big(\|  g_\varepsilon - 1\| _{L^{q^*}}
\|  \nabla \varphi\| _{L^2 L^{r_1}} +\| \nabla g_\varepsilon\| _{L^q}\|  \varphi\| _{L^2 L^{r_2}}\big)\leq C\varepsilon ^\sigma\| \nabla \varphi\| _{L^2 L^{r_1}},
\ea
\[|I_4| \leq C\|\widetilde\rho^0_\e \|_{L^\infty} \|\widetilde\ff_\e\| _{L^\infty L^\infty}\| g_\e- 1\| _{L^q}\|  \varphi \| _{L^2 L^{r_2}}
\leq C \e ^\sigma\| \nabla\varphi \| _{L^2 L^{r_1}},\]
and
\ba\nn
|I_5| \leq C\|\widetilde\rho^0_\e \|_{L^\infty} \|\widetilde{\uu}_\e^0\| _{L^2}\|g_\e- 1\| _{L^q}\|\varphi(0,x)\| _{L^{r_2}}
\leq C \e ^\sigma\|\varphi(0,x)\| _{L^{r_2}}.
\ea

\medskip

Next we estimate $I_{j}$ $(j=6,7,8,9,10)$, for which the estimates of the Bogovskii operator $\calB_{\e}$ in Lemma \ref{lem-div} will be repeatedly used. Since the Bogovskii operator $\calB_{\e}$ only applies on spatial variable, we have
 \ba\nn
 \p_{t} \varphi_{2} = \p_{t} \calB_{\e}(\varphi  \cdot \nabla  g_{\e}) = \calB_{\e}(\p_{t}\varphi \cdot \nabla  g_{\e}).
 \ea
 Since $q>2$ close to $2$, we can choose $r_{3} > \frac 32$ close to $\frac 32$ and  $r_{4}>1$ close to $1$ such that
$$
 \frac{1}{r_{3}} = \frac{1}{r_{4}} - \frac 13, \quad  \frac{1}{q} + \frac{1}{2} = \frac{1}{r_{4}}.
$$
%Since $(\p_{t} \varphi)   g_{\e} \cdot {\bf n}=0 \mbox{ on } \p \Omega_\e$, 
From \eqref{0-integrals}, we know that $\p_{t}\varphi \cdot \nabla  g_{\e}$ has zero integral mean. Using {\eqref{pro-div1}} and Sobolev embedding, we have
 \ba\nn
|I_6|&  \leq\| \rho_\e\uu_\e \| _{L^4 L^3} \|\p_{t} \varphi_{2} \|_{L^{\frac{4}{3}} L^{r_{3}}}  
\leq C \|\p_{t} \varphi_{2} \|_{L^{\frac{4}{3}} W^{1, r_{4}}} \\
& \leq C \left(1+\e^{\frac{(3-r_{4})\aaa-3}{r_{4}}}\right)\| \p_{t}\varphi \cdot\nabla  g_{\e}\| _{L^{\frac 43} L^{r_{4}}}  \\
& \leq   C\| \nabla  g_{\e}\| _{L^{q}}\|   \p_{t} \varphi \| _{L^{\frac 43} L^{2}}  \leq C \varepsilon^\sigma\|  \p_{t} \varphi \| _{L^{\frac 43} L^{2}},
\ea
 where we used the fact $(3-r_{4})\aaa > \aaa >3$. 

\medskip

 Applying \eqref{0-integrals}, we know that $\varphi \cdot \nabla  g_{\e}$ has zero integral mean. Using \eqref{pro-div1}, similar arguments give
\ba
|I_7|&  \leq\| \rho_\e\uu_\e \otimes  \uu_\e \| _{L^{\frac{4}{3}} L^2} \|\nabla  \calB_{\e}(\varphi \cdot \nabla  g_{\e}) \|_{L^{4} L^{2}}  \leq C \|\varphi \cdot\nabla  g_{\e}\| _{L^{4} L^{2}}  \\
& \leq   C\| \nabla  g_{\e}\| _{L^{q}}\| \varphi \| _{L^{4} L^{r_2}}  \leq C \varepsilon^\sigma\|\nabla\varphi \| _{L^{4} L^{r_1}},\nn
\ea
\ba
|I_8|&  \leq C\|\nabla  \uu_\e \| _{L^{2} L^2} \|\nabla  \calB_{\e}(\varphi \cdot \nabla  g_{\e}) \|_{L^{2} L^{2}}  \leq C \|\varphi \cdot\nabla  g_{\e}\| _{L^{2} L^{2}}  \\
& \leq   C\| \nabla  g_{\e}\| _{L^{q}}\| \varphi \| _{L^{4} L^{r_2}}  \leq C \varepsilon^\sigma\|\nabla\varphi \| _{L^{4} L^{r_1}},\nn
\ea
\ba\nn
|I_9|&  \leq C\|\rho^0_\e \|_{L^\infty} \|\ff_\e\| _{L^\infty L^\infty} \|\calB_{\e}(\varphi \cdot \nabla  g_{\e}) \|_{L^{4} L^{2}}  \leq C \|\varphi \cdot\nabla  g_{\e}\| _{L^{4} L^{2}}  \\
& \leq   C\| \nabla  g_{\e}\| _{L^{q}}\| \varphi \| _{L^{4} L^{r_2}}  \leq C \varepsilon^\sigma\|\nabla\varphi \| _{L^{4} L^{r_1}},
\ea
Thus we have
\ba
 |I_{7}| + |I_{8}|+|I_9| \leq C \varepsilon ^\sigma\| \nabla \varphi \| _{L^{4} L^{r_1}}.\nn
\ea

Furthermore,
\ba\nn
|I_{10}|&\leq C\|\rho^0_\e \|_{L^\infty} \|\uu^0_\e\| _{L^2} \|\calB_{\e}(\varphi(0,x) \cdot \nabla  g_{\e}) \|_{L^{2}}  \leq C \|\varphi(0,x) \cdot\nabla  g_{\e}\| _{L^{2}}  \\
& \leq   C\| \nabla  g_{\e}\| _{L^{q}}\| \varphi(0,x) \|_{L^{r_2}}  \leq C \varepsilon^\sigma\|\varphi (0,x)\| _{L^{r_2}},
\ea

Summing up the above estimates for $I_{j}$ $(j=1,\cdots,10)$, we have
$$
 |I^\varepsilon| \leq C\varepsilon ^\sigma \big(\|  \partial _t \varphi \| _{L^{\frac43} L^{2}}
+\| \nabla \varphi\| _{L^{4} L^{r_1}}+\|\varphi (0,x)\| _{L^{r_2}} \big),
$$
which implies our desired result \eqref{est-F-3D}.
\end{proof}

\section{Convergence of the nonlinear convective term}
Here we will show the convergence of the nonlinear convective term $\widetilde \rho_\e\widetilde \uu_{\e}\otimes \widetilde \uu_{\e}$.  A key observation is that some uniform estimates related to time derivative can be deduced from Proposition \ref{moment-equa}.  Indeed, by Proposition \ref{moment-equa}, for any $\psi\in C_c^\infty ((0,T)\times\Omega ;{\mathbb{R}^3})$ with $\dive\psi=0$, we have
\ba\nn%\label{moment-equa1}
\left| \l \partial _t (\widetilde \rho_\e\widetilde \uu_\e), \psi \r \right| &
\leq \int_0^T \int_\Omega  \left| \widetilde \rho_\e\widetilde \uu_\e
\otimes \widetilde \uu_\varepsilon :\nabla \psi \right|  + \left| \mu\nabla \widetilde \uu_\varepsilon:\nabla \psi   \right| + \left|  \ff_\e \cdot \psi \right| \dd x\dd t  +| \l\GG_\varepsilon, \psi \r|\\
&\leq C\| \nabla \psi\| _{L^4 L^2} + C\e^\sigma\big(\|  \partial_t \psi \|_{L^{\frac43} L^2}
+\| \nabla \psi\| _{L^{4} L^{r_1}} +\|\psi (0,x)\| _{L^{r_2}}\big) \\
& \leq C\| \nabla \psi\| _{L^{4} L^{r_1}} + C \e^\sigma\| \partial_t \psi \|_{L^{\frac43} L^2},
\ea
where $\sigma>0$ and $r_{1} \in (2,3)$. Let $P$ be the Leray-Helmholtz decomposition on $\Omega$. Thus we have the following decomposition
\ba\label{decomposition-u}
P(\widetilde \rho_\e\widetilde \uu_\e)  =  (\widetilde \rho_\e \widetilde \uu_\e)^{(1)}
+ \e ^\sigma  (\widetilde \rho_\e \widetilde \uu_\e)^{(2)},
\ea
where $\partial_t (\widetilde\rho_\e\widetilde \uu_\e) ^{(1)}$ is uniformly bounded in $L^{\frac{4}{3}}(0,T; W^{-1,r_{1}'}(\Omega)) $ and $(\widetilde \rho_\e\uue)^{(2)}$ is uniformly bounded in $L^4(0,T; L^{2}(\Omega))$, which means 
\be\label{remainder1}
\e ^\sigma(\widetilde \rho_\e\uue)^{(2)}\to 0 \ \mbox{strongly in} \  L^4(0,T; L^{2}(\Omega)).
\ee
Since $\widetilde \rho_\e\uue$ is uniformly bounded in $L^{\infty}(0,T; L^{2}(\Omega)) \cap L^2(0,T; L^6(\Omega))$ (see \eqref{est-rho u}), so is $P(\widetilde \rho_\e\uue)$.  Then $(\widetilde \rho_\e\uue)^{(1)} =P(\widetilde \rho_\e\uue) -  \e^\sigma(\widetilde \rho_\e\uue)^{(2)}$ is uniformly bounded in $L^4(0,T; L^{2}(\Omega))$.

By \eqref{conv-rho},  and the fact that $\uue$ converges weakly to $\uu$ in $L^2(0,T;W_0^{1,2}(\Omega))$, we have
\ba\label{conv-rho-u}
\widetilde\rho_\e \uue \to \rho\uu \ \mbox{in} \ \mathcal{D}'( (0,T)\times\Omega ).
\ea
Using the decomposition \eqref{decomposition-u} and \eqref{remainder1}, together with \eqref{conv-rho-u}, we have
$$
(\widetilde \rho_\e\uue)^{(1)} \to P(\rho\uu) \ \mbox{weakly in} \  L^4(0,T; L^{2}(\Omega)).
$$
By the fact that $\partial_t (\widetilde\rho_\e\widetilde \uu_\e) ^{(1)}$ is uniformly bounded in $L^{\frac{4}{3}}(0,T; W^{-1,r_{1}'}(\Omega)) $ and  $(\widetilde \rho_\e\uue)^{(1)}$ is uniformly bounded in $L^4(0,T; L^{2}(\Omega))$, applying Aubin-Lions type argument (see Lemma 6.4 in \cite{Novotn}) gives
\be\label{Convergence rho-u 1}
(\widetilde \rho_\e\uue) ^{(1)}  \to P(\rho\uu) \ \mbox{strongly in} \ L^{4}(0,T; W^{-1,2}(\Omega)),
\ee
Combining \eqref{decomposition-u}, \eqref{remainder1} and \eqref{Convergence rho-u 1}, we have

\be\label{Convergence P rho-u 1}
P(\widetilde \rho_\e\uue)  \to P(\rho\uu) \ \mbox{strongly in} \ L^{4}(0,T; W^{-1,2}(\Omega)),
\ee
which combined with the fact that $\uue\to \uu$ weakly in $L^{2}(0,T; W^{1,2}_{0}(\Omega))$, implies
\ba\label{conv-ruu1}
P(\widetilde \rho_\e\uue)    \otimes \uue \to  P(\rho\uu)  \otimes \uu \quad \mbox{in} \ \mathcal{D}'((0,T)\times \Omega).
\ea

 Now, we employ  the idea of Lions \cite{Lions1} (Section 2.3)  to show
 \be\nn
 \int_0^T \int_{\Omega} \widetilde\rho_\e| \widetilde\uu_\e |^{2} \dd x\dd t \to \int_0^T \int_{\Omega} \rho| \uu|^{2} \dd x\dd t, \mbox{ as } \e \rightarrow 0.
 \ee
Indeed, since $\dive \widetilde\uu_\e=\dive \uu=0$,  by \eqref{Convergence P rho-u 1},  we have
\ba\label{norm-convergence}
& \int_0^T \int_{\Omega} \widetilde\rho_\e| \widetilde\uu_\e |^{2} \dd x\dd t =  \int_0^T  (\widetilde\rho_\e \widetilde\uu_\e, \widetilde\uu_\e)_{L^{2}(\Omega)}\dd t \\
&=\int_0^T  (P(\widetilde\rho_\e \widetilde\uu_\e), \widetilde\uu_\e)_{L^{2}(\Omega)}\dd t=\int_0^T  \int_{\Omega} P(\widetilde\rho_\e \widetilde\uu_\e)  \cdot \widetilde\uu_\e \, \dd x \, \dd t\\
&{\to}\int_0^T  \int_{\Omega} P(\rho\uu) \cdot \uu \, \dd x \, \dd t=\int_0^T ( P(\rho\uu),\uu) _{L^{2}(\Omega)}\dd t\\
&=\int_0^T (\rho\uu,\uu) _{L^{2}(\Omega)}\dd t =\int_0^T \int_{\Omega} \rho| \uu|^{2} \dd x\dd t, \quad\mbox{as} \quad\e\to 0.
 \ea
Using \eqref{thm1-conv-0} and \eqref{conv-rho} (or \eqref{conv-rho-1}), we have $\sqrt{\widetilde\rho_\e}\widetilde \uu_\varepsilon$ converges to $\sqrt{\rho}\uu$ weakly in $L^{2}((0,T)\times \Omega;\mathbb R^{3})$. Combined with \eqref{norm-convergence}, we immediately have
\be\label{strong convergence 1}
\sqrt{\widetilde\rho_\e}\widetilde \uu_\varepsilon\to \sqrt{\rho}\uu \quad\mbox{strongly in } L^{2}((0,T)\times \Omega;\mathbb R^{3})\mbox{ as } \e \rightarrow 0.
\ee
By  \eqref{est-density} and \eqref{est-u}, $\sqrt{\widetilde\rho_\e}\widetilde \uu_\varepsilon$ is uniformly bounded in  $L^{2}(0,T;L^6(\Omega))$. Combined with $\eqref{strong convergence 1}$, we have
\be\nn
\sqrt{\widetilde\rho_\e}\widetilde \uu_\varepsilon \to \sqrt{\rho}\uu \quad\mbox{strongly in } L^{2}(0,T;L^q(\Omega))\mbox{ as } \e \rightarrow 0,
\ee
for all  $1\leq q<6$. Then we have 
 \ba\label{convective-conv}
\widetilde\rho_\e\uue\otimes \uue  = \sqrt{\widetilde\rho_\e} \widetilde \uu_\varepsilon  \otimes \sqrt{\widetilde\rho_\e}\widetilde \uu_\varepsilon \to \sqrt{\rho}\uu \otimes \sqrt{\rho} \uu =  \rho\uu\otimes \uu \ \mbox{strongly in} \  L^{1}(0,T; L^{s}(\Omega) ),
\ea
for all  $1\leq s<3$.

\section{Passing to the limit}
 Now we are ready to derive the limit equations for momentum equations.

For each $\varphi \in C_{c}^{\infty}([0,T)\times\Omega;\RR^{3}), \ \dive \varphi =0$, combining \eqref{thm1-conv-0} and \eqref{conv-rho}, we have
\ba\label{conv-1}
\int_0^T \int_\Omega (\widetilde\rho_\e \uue) \cdot \partial_{t}\varphi \dd x\dd t \to
\int_0^T \int_\Omega  (\rho\uu) \cdot \partial_{t}\varphi \dd x\dd t, \mbox{ as } \e \rightarrow 0.
\ea
By \eqref{convective-conv}, we immediately have 
\ba\label{conv 2}
\int_0^T \int_{\Omega} \widetilde{\rho}_\e\widetilde{\uu}_\varepsilon  \otimes \widetilde{\uu}_\varepsilon :\nabla \varphi \dd x\dd t \to\int_0^T \int_{\Omega}\rho\uu  \otimes\uu:\nabla \varphi \dd x\dd t, \mbox{ as } \e \rightarrow 0.
\ea
Then the weak convergence of $\uue$ in \eqref{thm1-conv-0} implies
\ba\label{conv 3}
\mu\int_0^T \int_{\Omega} \nabla\widetilde{\uu}_\e:\nabla \varphi \dd x\dd t \to\mu\int_0^T \int_{\Omega} \nabla\uu:\nabla \varphi \dd x\dd t, \mbox{ as } \e \rightarrow 0.
\ea
Using the strong convergence of $\widetilde  \rho_{\e}$ in \eqref{conv-rho} and $\widetilde{\ff}_\e$ in \eqref{ini-force}, we have
\ba\label{conv 4}
\int_0^T \int_{\Omega} \widetilde{\rho}_\e\widetilde{\ff}_\e   \cdot \varphi \dd x\dd t \to\int_0^T \int_{\Omega} \rho\ff  \cdot \varphi \dd x\dd t, \mbox{ as } \e \rightarrow 0.
\ea
Since  $\widetilde\rho_{\e}^{0}\uue^0 \to \rho^0\uu^0 \  \mbox{strongly in}  \ L^2(\Omega)$ by the initial value conditions \eqref{ini-force}, we have
\ba\label{conv-5}
\int_\Omega (\widetilde\rho_\e^0\widetilde \uu_\e^0)  \cdot \varphi(0,x)   \,\dd x \to \int_\Omega  (\rho^0\uu^0) \cdot \varphi(0,x) \dd x, \mbox{ as } \e \rightarrow 0.
\ea
Moreover, \eqref{est-F-3D} shows
\ba\label{limit-GG}
\l\GG_\varepsilon, \varphi \r\to 0, \mbox{ as } \e \rightarrow 0.
\ea
Combining \eqref{conv-1}-\eqref{limit-GG}, passing $\e\to 0$ in \eqref{weak-momentum1}, we deduce that for each $\varphi \in C_{c}^{\infty}([0,T)\times\Omega;\RR^{3}), \ \dive \varphi =0$,
\ba\nn %\label{u-weak-1}
\int_{0}^{T} \int_\Omega - (\rho\uu) \p_{t} \varphi - (\rho\uu)\otimes \uu : \nabla \varphi  + \mu\nabla \uu :\nabla \varphi \dd x \dd t  = \int_{0}^{T} \int_\Omega \rho\ff \cdot \varphi\dd x\dd t+ \int_\Omega  (\rho^0\uu^0) \cdot \varphi(0,x) \dd x.
\ea
Thus we complete the proof of Theorem \ref{thm-1}.
\begin{remark}\label{remark-3}
 Lions \cite{Lions1} Theorem  2.3  shows that if $(\rho,\uu)$ is a weak solution of \eqref{equa-inhom-NS} in $(0,T)\times\Omega$, then we have $P(\rho\uu)\in C_{weak}([0,T];L^2(\Omega))$. Here $P$ is the Leray-Helmholtz projection operator onto the subspace of divergence free vector fields on $\Omega$. 
 \end{remark}

\section*{Acknowledgements}
{\it Y. Lu and J. Pan are partially supported  by the Natural Science Foundation of Jiangsu Province under grant BK20240058 and by the National Natural Science Foundation of China under Grant 12171235.}

\section*{Conflict of interest}
The authors declare no conflict of interest in this paper.

\if0

\end{document}
\fi

\end{document}